\newtheorem{theorem}{Theorem}[section]
\newtheorem{lemma}[theorem]{Lemma}
\newtheorem{proposition}[theorem]{Proposition}
\newtheorem{corollary}[theorem]{Corollary}
\newtheorem{Remark}{Remark}[section]
\def\R{\mathbb R}
\def\Z{\mathbb Z}
\begin{document}

\title[On Unbounded Solutions of Ergodic Problems]{On Unbounded Solutions of Ergodic Problems in $\R^m$ for viscous Hamilton-Jacobi Equations}
\author[]{Guy Barles}
\address{
Guy Barles:
Laboratoire de Math\'ematiques et Physique Th\'eorique (UMR CNRS 7350), F\'ed\'eration Denis Poisson (FR CNRS 2964),
Universit\'e Fran\c{c}ois Rabelais Tours, Parc de Grandmont, 37200 Tours, FRANCE. {\tt Guy.Barles@lmpt.univ-tours.fr}}

\author[]{Joao Meireles}
\address{
Joao Meireles: 
Dipartimento di Matematica, Universit\`a di Padova, via Trieste 63, 35121 Padova, Italy. {\tt jhbmeireles@gmail.com}}

\begin{abstract} 
In this article we study ergodic problems in the whole space $\R^m$ for viscous Hamilton-Jacobi Equations in the case of locally Lipschitz continuous and coercive right-hand sides. We prove in particular the existence of a critical value $\lambda^*$ for which (i) the ergodic problem has solutions for all $\lambda \leq \lambda^*$, (ii) bounded from below solutions exist and are associated to $\lambda^*$, (iii) such solutions are unique (up to an additive constant). We obtain these properties without additional assumptions in the superquadratic case, while, in the subquadratic one, we assume the right-hand side to behave like a power. These results are slight generalizations of analogous results by N. Ichihara but they are proved in the present paper by partial differential equations methods, contrarily to N. Ichihara who is using a combination of pde technics with probabilistic arguments.
\end{abstract}

\keywords{Viscous Hamilton-Jacobi Equations, Ergodic Problems, Maximum Principles}

\subjclass[2010]{35J60, 35P30, 35B40, 35B50}

\maketitle
\tableofcontents

\section{Introduction}
We are interested in this article in {\em ergodic problems} for viscous Hamilton-Jacobi Equations in the whole space $\R^m$, namely to find a pair $(\lambda, \phi) \in \mathbb{R} \times C^2(\mathbb{R}^m)$ which solves
$$
	(EP) \quad \quad 
		-\frac{1}{2}\Delta \phi(y)+\frac{1}{\theta} | D \phi(y)|^{\theta}=f(y) -\lambda    \quad \text{in } \mathbb{R}^m,
$$
where $D \phi$ and $\Delta \phi$ denotes respectively the gradient and the Laplacian of $\phi:\R^m \to \R$, $\theta>1$ and $f$ is (at least) a bounded from below, locally Lipschitz continuous function. More precise assumptions are given later on.  Sometimes we write, in the sequel, $(EP)_\lambda$ instead of $(EP)$ to point out the dependence of the ergodic problem in $\lambda$. 

Our aim is to prove the following type of properties
\begin{itemize}
\item There exists a critical value $\lambda^*=\lambda^*(f)$ such that $(EP)$ has a solution for any $\lambda \leq \lambda^*$.
\item Problem $(EP)$ has bounded from below solutions.
\item Bounded from below solutions are unique (up to additive constants) and associated to $\lambda^*(f)$.
\item $\lambda^*(f)$ depends continuously and monotonically on $f$.
\end{itemize}
Clearly, one needs suitable assumptions to prove such results and we first point out that there is an important difference between the {\em superquadratic case} ($\theta \geq 2$) where the coercivity of $f$ turns out to be sufficient and the {\em subquadratic case} ($\theta < 2$) where we have to impose more restrictive assumption on the growth of $f(y)$, typically a behavior like $|y|^\alpha$ for some $\alpha > 0$.

We also point out that most of our results were already proved by Ichihara \cite{NI1, NI2,NI3,NI4} (see also Cirant \cite{Ci}) by combining partial differential equations (pde in short) and probabilistic methods, in particular using in a crucial way the ergodic measure for the uniqueness parts. Here we only use pde methods which allow to prove slightly more general results.

The main difficulty in the proofs comes from the unboundedness of solutions and the treatment of infinity, the most common framework for such ergodic problems being either the periodic case or the case of bounded domains with Neumann boundary conditions. 

To provide a complete description of the literature on ergodic problems is clearly impossible but we can indicate few articles which are milestones and which are related to the present work. We cannot avoid starting with the seminal (yet unpublished) work of Lions, Papanicolaou and Varadhan \cite{lpv} about homogenization of {\em first-order} Hamilton-Jacobi equations (in the case of coercive Hamiltonians). Indeed, solving the cell problem is the way the ``effective'' (or averaged) Hamilton-Jacobi Equation is determined, and this cell problem is an ergodic problem in the whole space but with periodic boundary conditions. This article is the starting point of the ``Weak KAM'' theory for which we refer to Fathi \cite{F12} and reference therein.

Extensions to the case of unbounded solutions for first-order equations in the non-periodic case is consider in the paper of Roquejoffre with the first author (\cite{BR}) : the existence of Lipschitz solutions is obtained but the most important part is to show that key properties of the ergodic problem, namely that it should govern the large time behavior of solutions of the associated evolution equation, may be wrong. Some other counterexamples of the ergodic behavior in the non-periodic case are also obtained in the work of Souganidis and the first author \cite{BS00a}.

For {\em second-order} equations, a systematic approach is developed in the periodic case by Souganidis and the first author \cite{bs01}, where, not only ergodic problems in the space-time framework are solved, but also the long time behavior of space-time periodic solutions of quasilinear pdes is established. Long time behavior and ergodic problems for second order pdes with Neumann boundary conditions have been
studied in the series of papers \cite{gl05,L08,BLLS08}. More recently, ergodic problems for viscous Hamilton-Jacobi equations in bounded domains, with state-constraint boundary conditions, have been treated by Tchamba in the
superquadratic case \cite{T10}, and  Porretta, Tchamba and the first author in the subquadratic case \cite{BPT10}. These last results are playing an important role in the present work since approximations by bounded domains are natural and we are going to use them.

As we already mentioned it, the results of the present work are close to the works of Ichihara \cite{NI1,NI2,NI3,NI4} where he studies the existence of ergodic constant and the connections with  stochastic ergodic control, and the recurrence/transience of optimal feedback processes. Compared to these results, we can treat more general $f$ but for some properties we have less precise results.

Our paper is organized as follows: in Section~\ref{Exist}, we prove the existence of solutions for $(EP)$, solving the two first point we mention at the beginning of the introduction. Section~\ref{Uni} is devoted to the uniqueness results, first in the superquadratic case ($\theta \geq 2$) where we can actually show that bounded from below solutions are unique (up to additive constants) for any locally Lipschitz continuous, {\em coercive} functions $f$ (without any restriction on its growth at infinity), while in the subquadratic case, it holds only when $f$ grows like $|y|^\alpha$ for some $\alpha>0$. Finally, in Section~\ref{l-star}, we show that bounded from below solutions are necessarely associated to $\lambda^*(f)$ and we provide some continuity properties of $\lambda^*(f)$ in $f$. We also study the convergence of several approximations and we establish the connections of the ergodic constant with the large time behavior of solutions of evolution equations.

\noindent{\bf Acknowledgement : }: Joao Meireles was partially supported by EU under
  the 7th Framework Programme Marie Curie Initial Training Network
  ``FP7-PEOPLE-2010-ITN'', SADCO project, GA number 264735-SADCO.

\section{General Existence Results for the Ergodic Problem and Properties of the Ergodic Constants}\label{Exist}
In this section we investigate all the properties of the ergodic problem and ergodic constants which are the same in the subquadratic and superquadratic case. Our aim is therefore to use as general assumptions on $f$ as possible, even if we have to restrict them later. In the sequel, if $\phi \in C^2(\R^m)$, we use the notation $ G[\phi]$ for the function $-\frac{1}{2} \Delta \phi + \frac{1}{\theta} |D \phi|^{\theta}-f.$

\subsection{A General Existence Result and its Main Consequences}

A first key result is the
\begin{theorem}\label{ber}
Suppose that $f \in W^{1,\infty}_{loc}(\R^m)$. If, for $\lambda \in \mathbb{R}$, $(EP)_\lambda$ has a $C^2$-subsolution, then $(EP)_\lambda$ has a solution.
\end{theorem}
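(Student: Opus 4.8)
The plan is to use Perron's method combined with the a priori gradient bound coming from coercivity. Suppose $\psi \in C^2(\R^m)$ is a subsolution of $(EP)_\lambda$. The natural idea is to exhaust $\R^m$ by balls $B_R$ and solve, on each $B_R$, the Dirichlet problem for the ergodic PDE with, say, the boundary data $\psi$ on $\partial B_R$ (or with a state-constraint/large boundary data formulation). The key point is that the Hamiltonian $\frac{1}{\theta}|p|^\theta$ is coercive in $p$, uniformly with respect to $y$ on each fixed ball, so classical results (Lions, or the Bernstein method, whence presumably the label ``ber'') give an interior gradient bound for subsolutions — and more generally for the solutions $\phi_R$ of the truncated problems — depending only on $\|f\|_{L^\infty(B_R)}$, $\lambda$, and $\mathrm{dist}(\cdot,\partial B_R)$, but \emph{not} on the boundary data. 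This converts the first-order term into a bounded, Lipschitz quantity on compact subsets, so that interior elliptic regularity (Schauder, after $C^{1,\alpha}$ estimates) upgrades $\phi_R$ to $C^2$ with locally uniform bounds.

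The steps, in order, would be: (1) For each $R$, construct a solution $\phi_R \in C^2(B_R)$ of the ergodic equation in $B_R$, either by solving a Dirichlet problem with boundary data $\psi|_{\partial B_R}$ via standard theory for quasilinear elliptic equations, or — to keep control from below — by the approximation scheme using state-constraint boundary conditions on $\partial B_R$ as in \cite{T10,BPT10}. (2) Normalize: replace $\phi_R$ by $\phi_R - \phi_R(0)$ so all the functions pass through the origin. (3) Establish the locally uniform gradient bound: on $B_{R/2}$ (or any fixed compact $K$ once $R$ is large), $|D\phi_R| \le C_K$ with $C_K$ independent of $R$; this is where coercivity of $|p|^\theta$ enters, via a Bernstein-type argument applied to $w = |D\phi_R|^2$ or via the barrier/doubling technique. (4) From the gradient bound plus the equation, deduce a locally uniform $C^{2,\alpha}$ bound by elliptic regularity (the right-hand side $f - \lambda + \frac{1}{\theta}|D\phi_R|^\theta$ is locally bounded and Hölder via the $C^{1,\alpha}$ estimate). (5) Use Arzelà–Ascoli / a diagonal extraction to pass to a limit $\phi_R \to \phi$ in $C^2_{loc}(\R^m)$ along a subsequence; the limit $\phi \in C^2(\R^m)$ solves $(EP)_\lambda$ in all of $\R^m$.

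The main obstacle, I expect, is step (3): obtaining the gradient estimate on interior balls that is \emph{uniform in $R$}. The coercivity makes the estimate local and independent of boundary data in principle, but one must be careful that the constant depends only on $\|f\|_{W^{1,\infty}(B_{R})}$ restricted to a fixed compact neighborhood and on $\lambda$, and not implicitly on the size of $\phi_R$ or of the domain; in the subquadratic regime $\theta < 2$ the Bernstein argument is more delicate and typically requires the gradient of $f$ to be controlled, which is exactly why the hypothesis $f \in W^{1,\infty}_{loc}$ (rather than merely continuous) is imposed. A secondary technical point is ensuring the truncated problems are actually solvable with the stated boundary behavior — here one leans on the cited results \cite{T10,BPT10,L08} for the well-posedness of the state-constraint or Dirichlet problems on bounded domains for exactly this class of viscous Hamilton–Jacobi equations — and checking that the role of the subsolution $\psi$ is precisely to provide a lower (or comparison) barrier guaranteeing the family $\{\phi_R\}$ does not degenerate as $R \to \infty$.
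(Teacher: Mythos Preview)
Your proposal is essentially the paper's proof: solve the Dirichlet problem $G[\phi_R]=-\lambda$ in $B_R$ with boundary data $\psi$, normalize by subtracting $\phi_R(0)$, apply the interior gradient estimate (Theorem~A.2, which depends only on $\|f\|_{W^{1,\infty}}$ on a slightly larger ball and not on the boundary data or on $R$), bootstrap to $C^{2,\iota}$ via elliptic regularity, and extract a diagonal subsequence. One clarification: the role of the subsolution $\psi$ is not to prevent degeneration of the family $\{\phi_R\}$ (normalization plus the gradient bound already handles that), but rather to guarantee \emph{solvability} of the Dirichlet problem on each $B_R$ --- Theorem~A.1(b) requires a $C^2$ subsolution for existence when the zeroth-order term $\epsilon\phi$ is absent.
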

\begin{proof}
Let $\psi$ be a $C^2$-subsolution of $(EP)_\lambda$.  By Theorem~A.1(b) (Appendix~A), we know that, for any $R>1$, there exists a solution $\phi_R \in C^{2}(\bar{B}_R)$ of
\begin{equation*}
G[\phi_R]= - \lambda \text{ in } B_R, \quad \phi_R=\psi \text{ on } \partial B_R,
\end{equation*}
where, here and below, $B_R:=\{x: |x|<R\}$. We now set $\hat{\phi}_R = \phi_R -\phi_R(0)$ and consider the family of solutions $\{ \hat{\phi}_R \}_{R>1}$. By Theorem~A.2, we know that, for all $0 < R'+1<R$ there exists a constant $C>0$ depending only on $R'$, $\theta$ and $m$ such that
\begin{align*}
\sup_{B_{R'}} |D \hat{\phi}_R| &\leq C(1+\sup_{B_{R'+1}}| f(y) -\lambda|^{\frac{1}{\theta}}+\sup_{B_{R'+1}} |D  f(y)  |^{\frac{1}{2\theta-1}}).
\end{align*} 
In particular, we observe that, for all $R>R'+1$, $\sup_{B_{R'}} |D \hat{\phi}_R|$ is bounded by a constant which does not depend on $R$. This gradient bound together with the fact that $\hat{\phi}_R(0)=0$ also imply a local $L^\infty$-bound depending on $R'$ but not on R. Then, using elliptic regularity, we deduce from this gradient bound that $\hat{\phi}_R$ is bounded in $C^{2, \iota}(B_{R'})$ for any $0<\iota <1$ by a constant independant of $R>R'+1$.

We point out that this last step use in an essential way the fact that $\hat{\phi}_R$ is locally uniformly bounded, an information which may not be true with the family of solutions $\{ \phi_R \}_{R>1}$ since $\phi_R (0)$ may go to infinity when $R$ tends to infinity.

We can now apply Arzela-Ascoli's Theorem not only for $\hat{\phi}_R$ but also for $D \hat{\phi}_R$ and $D^2 \hat{\phi}_R$ and conclude, up to a diagonal extraction argument, that there exist a sequence $\{R_n\}_n \to +\infty$ as $n \to +\infty$ such that $\{\phi_n \}_n := \{ \hat{\phi}_{R_n} \}_n$ converges locally in $\mathbb{R}^m$ to a function $\phi \in C^{2} (\mathbb{R}^m)$ which satisfies $G[\phi](y)=- \lambda$ in $\R^m$.
\end{proof} 

An immediate corollary of Theorem~\ref{ber} is the
\begin{corollary}\label{ins}(Infinite number of solutions) If $(\lambda_1,\phi)$ is a subsolution of $(EP)$, then there exist a solution of $(EP)$ for any $\lambda_2 \leq\lambda_1$. In particular, $(EP)$ has infinitely many solutions.
\end{corollary}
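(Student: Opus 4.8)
The plan is to deduce everything from Theorem 2.1 once we observe that a subsolution of $(EP)_{\lambda_1}$ is automatically a subsolution of $(EP)_{\lambda_2}$ for every $\lambda_2 \le \lambda_1$. Indeed, if $\phi \in C^2(\R^m)$ satisfies
$$
-\frac12 \Delta \phi + \frac1\theta |D\phi|^\theta \le f - \lambda_1 \quad \text{in } \R^m,
$$
then, since $-\lambda_1 \le -\lambda_2$, the same $\phi$ satisfies $G[\phi] \le -\lambda_2$ in $\R^m$, i.e. $\phi$ is a $C^2$-subsolution of $(EP)_{\lambda_2}$. By hypothesis $f \in W^{1,\infty}_{loc}(\R^m)$ (this is the standing assumption throughout the section), so Theorem~\ref{ber} applies and produces a genuine solution $(\lambda_2, \phi_{\lambda_2})$ of $(EP)_{\lambda_2}$. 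This already gives the first assertion.

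For the ``infinitely many solutions'' part, it suffices to exhibit \emph{one} value $\lambda_1$ for which $(EP)_{\lambda_1}$ has a $C^2$-subsolution, because then every $\lambda_2 < \lambda_1$ yields a solution, and distinct values of $\lambda$ clearly give distinct solutions (two pairs $(\lambda, \phi)$ and $(\lambda', \phi')$ solving $(EP)$ with $\lambda \ne \lambda'$ cannot coincide). The cleanest choice is $\phi \equiv 0$: then $G[0] = -f$, so $\phi \equiv 0$ is a subsolution of $(EP)_{\lambda_1}$ precisely when $-f \le -\lambda_1$, i.e. when $\lambda_1 \le \inf_{\R^m} f$. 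Since $f$ is bounded from below, $\lambda_1 := \inf_{\R^m} f$ is finite, and the constant function $0$ is a $C^2$-subsolution of $(EP)_{\lambda_1}$. Applying the previous paragraph, $(EP)_{\lambda_2}$ has a solution for every $\lambda_2 \le \inf_{\R^m} f$, and letting $\lambda_2$ range over this half-line produces infinitely many (indeed a continuum of) solutions of $(EP)$.

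There is essentially no obstacle here: the corollary is a formal consequence of the monotonicity of the equation in $\lambda$ together with Theorem~\ref{ber}. The only point requiring a (trivial) verification is that a subsolution at level $\lambda_1$ remains a subsolution at any lower level, which is immediate from the sign of $\lambda$ in $(EP)$, and that the lower bound on $f$ guarantees a nonempty range of admissible $\lambda$'s via the trivial subsolution $\phi \equiv 0$.
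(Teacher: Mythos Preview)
Your first paragraph is exactly the intended argument and matches the paper's (implicit) reasoning: Corollary~\ref{ins} is stated as an immediate consequence of Theorem~\ref{ber}, and the monotonicity in $\lambda$ you spell out is precisely why.

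However, your second paragraph misreads the statement. The corollary's hypothesis is that a subsolution $(\lambda_1,\phi)$ \emph{already exists}; the phrase ``In particular'' then extracts a consequence under that same hypothesis. You do not need to manufacture a subsolution via $\phi\equiv 0$ and the lower bound on $f$ --- that assumption is \emph{not} part of Corollary~\ref{ins} (it first appears in Corollary~\ref{ebfb}, which is exactly the statement you end up proving). The ``infinitely many solutions'' claim follows directly from your first paragraph: for each $\lambda_2\le\lambda_1$ you obtain a solution pair $(\lambda_2,\phi_{\lambda_2})$, and distinct $\lambda_2$'s give distinct pairs, so you already have a continuum of solutions. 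Your extra argument is not wrong, but it invokes an assumption you do not have at this point and proves the next corollary rather than this one.
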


An interesting case where these results apply is the case of bounded from below functions $f$
\begin{corollary}\label{ebfb}
If $f \in W^{1,\infty}_{loc}(\R^m)$ is bounded from below, then $(EP)$ has infinitely many solutions.
\end{corollary}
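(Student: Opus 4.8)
The plan is to reduce everything to Theorem~\ref{ber} (or equivalently Corollary~\ref{ins}) by exhibiting a single value of $\lambda$ for which $(EP)_\lambda$ admits a $C^2$-subsolution. The natural candidate is a constant function, say $\phi \equiv 0$: then $D\phi \equiv 0$ and $\Delta\phi \equiv 0$, so $G[0] = -f$ pointwise. Hence $\phi\equiv 0$ is a $C^2$-subsolution of $(EP)_{\lambda_1}$, i.e. $G[0] \le -\lambda_1$ on $\R^m$, precisely when $\lambda_1 \le f(y)$ for all $y$, that is, when $\lambda_1 \le \inf_{\R^m} f$.

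Since $f$ is bounded from below, the quantity $\lambda_1 := \inf_{\R^m} f$ is a finite real number, and the above shows that $\phi\equiv 0 \in C^2(\R^m)$ is a subsolution of $(EP)_{\lambda_1}$. Applying Theorem~\ref{ber} (whose hypothesis $f \in W^{1,\infty}_{loc}(\R^m)$ is assumed), $(EP)_{\lambda_1}$ has a solution. Then Corollary~\ref{ins} yields, for every $\lambda_2 \le \lambda_1$, a solution of $(EP)_{\lambda_2}$; as these pairs $(\lambda_2,\phi)$ are distinct for distinct $\lambda_2$, $(EP)$ has infinitely many solutions.

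There is essentially no obstacle here: all the analytic work is already contained in Theorem~\ref{ber} (the Bernstein-type gradient estimates of Theorem~A.2 and the diagonal compactness argument), and the present statement is just the observation that boundedness from below of $f$ guarantees the existence of a constant subsolution at the level $\lambda_1=\inf f$. The only mild point to be careful about is to phrase the subsolution inequality with the correct sign convention for $(EP)_\lambda$, so that ``subsolution at level $\lambda_1$'' indeed corresponds to $\lambda_1 \le \inf_{\R^m} f$ rather than the reverse inequality.
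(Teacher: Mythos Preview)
Your proof is correct and follows essentially the same approach as the paper: both observe that a constant function (you take $\phi\equiv 0$) is a $C^2$-subsolution of $(EP)_{\lambda_1}$ for $\lambda_1=\inf_{\R^m} f$, and then invoke Theorem~\ref{ber}/Corollary~\ref{ins} to conclude. Your write-up is in fact slightly more careful, using $\inf$ rather than $\min$ and spelling out the sign check.
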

\begin{proof} 
The proof is immediate remarking that, for $\lambda = \min_{\R^m} (f)$, any constant function is a subsolution of $(EP)$.
\end{proof} 
\subsection{Existence of a critical value}

The main result of this section is the

\begin{theorem}\label{exist-CV}
Assume that $f \in W^{1,\infty}_{loc}(\R^m)$ is bounded from below and set $\lambda^* := \sup \{\lambda \in \mathbb{R} | \text{ $(EP)$ has a subsolution} \}$. Then $\lambda^*$ is finite and $(EP)_{\lambda^*}$ has a $C^2$-solution.
\end{theorem}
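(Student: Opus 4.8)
The plan is to show that $\lambda^*$ is finite by exhibiting an a priori upper bound on any $\lambda$ for which $(EP)_\lambda$ has a subsolution, and then to obtain the solution at $\lambda^*$ by an approximation argument combined with the gradient estimates already used in the proof of Theorem~\ref{ber}. For finiteness, suppose $\psi$ is a $C^2$-subsolution of $(EP)_\lambda$. Evaluating at a point $y_0$ where $\psi$ attains a local minimum on a ball (such points exist on large balls, or one can localize against a suitable test function) forces $-\tfrac12\Delta\psi(y_0)\le 0$ and $D\psi(y_0)=0$, hence $\lambda \le f(y_0) \le \sup_{B_R} f$ for $R$ large; more robustly, testing the subsolution inequality against the constant function or using that $f$ is bounded below, one gets $\lambda \le C$ for a constant depending only on local data of $f$. (If $f$ is merely bounded from below one still needs coercivity or at least local boundedness of $f$ to bound $\lambda$ from above — here $f\in W^{1,\infty}_{loc}$ gives local boundedness, and a minimum-point argument on any fixed ball yields $\lambda\le \sup_{B_1}f<+\infty$.) Together with Corollary~\ref{ebfb}, which guarantees $\lambda^*\ge \min_{\R^m}f>-\infty$, this shows $\lambda^*\in\R$.

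Next I would construct the solution at $\lambda^*$. Take a sequence $\lambda_n \uparrow \lambda^*$; by definition of the supremum each $(EP)_{\lambda_n}$ has a subsolution, so by Theorem~\ref{ber} each $(EP)_{\lambda_n}$ has a $C^2$-solution $\phi_n$. Normalize $\hat\phi_n := \phi_n - \phi_n(0)$, so $\hat\phi_n(0)=0$. The point is that the interior gradient estimate from Theorem~A.2 applies to each $\hat\phi_n$ on balls $B_{R'}$ and gives
\begin{equation*}
\sup_{B_{R'}}|D\hat\phi_n| \le C\Bigl(1 + \sup_{B_{R'+1}}|f(y)-\lambda_n|^{1/\theta} + \sup_{B_{R'+1}}|Df(y)|^{1/(2\theta-1)}\Bigr),
\end{equation*}
with $C$ depending only on $R'$, $\theta$, $m$; since $\lambda_n$ ranges in the bounded set $[\lambda_1,\lambda^*]$ and $f\in W^{1,\infty}_{loc}$, the right-hand side is bounded uniformly in $n$ on each fixed ball. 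As in the proof of Theorem~\ref{ber}, the gradient bound plus $\hat\phi_n(0)=0$ yields a uniform local $L^\infty$ bound, and then elliptic regularity gives uniform $C^{2,\iota}_{loc}$ bounds. Arzelà–Ascoli and a diagonal extraction produce a subsequence $\hat\phi_n \to \phi$ in $C^2_{loc}(\R^m)$, and passing to the limit in $G[\hat\phi_n] = -\lambda_n$ (using $\lambda_n\to\lambda^*$) gives $G[\phi]=-\lambda^*$, i.e. $\phi\in C^2(\R^m)$ solves $(EP)_{\lambda^*}$.

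The main obstacle is the uniform gradient estimate along the approximating sequence: everything hinges on Theorem~A.2 producing bounds that are uniform in $n$, which requires that $\sup_{B_{R'+1}}|f-\lambda_n|^{1/\theta}$ and $\sup_{B_{R'+1}}|Df|^{1/(2\theta-1)}$ stay bounded — this is exactly where $f\in W^{1,\infty}_{loc}$ and the boundedness of $\{\lambda_n\}$ (a consequence of the finiteness of $\lambda^*$ just established) are used. A secondary subtlety is making the finiteness argument for $\lambda^*$ rigorous without assuming more on $f$ than stated: one should argue via a minimum point of a subsolution on a fixed ball rather than on all of $\R^m$ (where a subsolution need not attain its infimum), so that only the local bound on $f$ is invoked. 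Once the uniform estimates are in hand, the compactness and passage to the limit are routine and mirror the argument already carried out for Theorem~\ref{ber}.
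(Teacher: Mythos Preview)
Your construction of a solution at $\lambda^*$ is correct and matches the paper exactly: take $\lambda_n\uparrow\lambda^*$, use Theorem~\ref{ber} to get solutions $\phi_n$, normalize, apply the uniform local gradient bounds from Theorem~A.2 (which are indeed uniform in $n$ because $\{\lambda_n\}$ is bounded), upgrade via elliptic regularity, and extract a $C^2_{loc}$ limit.

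The gap is in your finiteness argument. A $C^2$-subsolution $\psi$ need not have an interior local minimum on any ball: the minimum of $\psi$ on $\bar B_R$ can lie on $\partial B_R$ for every $R$ (think of $\psi(y)=-|y|^2$, or any strictly concave function, or even a linear function). At a boundary minimum you do not get $D\psi(y_0)=0$ or $\Delta\psi(y_0)\ge 0$, so the inequality $\lambda\le f(y_0)$ does not follow. Your fallback suggestions---``testing against the constant function'' or ``localizing against a suitable test function''---are not made precise, and it is not clear how to make them work: adding a bump function to force an interior extremum introduces first- and second-order error terms that you cannot control without a priori bounds on $D\psi$ and $\Delta\psi$, and those bounds are exactly what you are trying to establish.

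The paper handles finiteness by a completely different route: assume for contradiction that $\lambda_k\to+\infty$ with solutions $\phi_k$, rescale $\psi_k:=\lambda_k^{-1/\theta}(\phi_k-\phi_k(0))$, observe that $\psi_k$ solves an equation whose diffusion coefficient $\lambda_k^{-1/\theta^*}$ vanishes, apply Theorem~A.2 to get uniform local Lipschitz bounds on $\psi_k$, and pass to a viscosity-solution limit $\psi$ satisfying $\frac{1}{\theta}|D\psi|^\theta\le -1$, which is absurd. This rescaling/vanishing-viscosity argument is what replaces the unavailable pointwise minimum argument.
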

	\begin{proof} The proof of Corollary~\ref{ebfb} implies $\lambda^* \geq \min_{\R^m} (f)$ because, for $\lambda=\min_{\R^m} (f)$, any constant function is a subsolution of $(EP)_\lambda$. To prove that $\lambda^* < + \infty$, we argue by contradiction assuming that $\lambda^*  = + \infty$. Then, there exists a sequence of solutions $\{(\lambda_k, \phi_k) \}_k$ of $(EP)$ such that $\lambda_k \to +\infty$ as $k \to +\infty$. We define $\psi_k := \lambda_k^{-\frac{1}{\theta}} \hat{\phi}_k$ where $\hat{\phi}_k = \phi_k -\phi_k(0)$. Then, by Theorem~A.2, $\hat{\psi}_k$ is locally uniformly bounded in $W^{1,\infty}$ by the same argument as in the proof of Theorem~\ref{ber} since $\{(\lambda_k, \psi_k) \}_k$ solves
$$
-\frac{1}{2} \lambda_k^{-\frac{1}{\theta^*}} \Delta \psi_k + \frac{1}{\theta} |D \psi_k|^\theta = \lambda_k^{-1} f_k -1\quad\hbox{in  }\R^m,
$$
where $1=\frac{1}{\theta}+\frac{1}{\theta^*}$ and $\lambda_k^{-\frac{1}{\theta^*}} \to 0$ as $k \to +\infty$. Then using the estimate for the solution given by Theorem A.2 and which is uniform in $k$, we can apply Ascoli-Arzela Theorem together with a diagonal extraction procedure : this shows that there exists a subsequence of the sequence $(\psi_k)_k$ which converges locally uniformly to a function $\psi \in W^{1,\infty}_{loc}(\R^m)$.

Therefore, by the stability result for viscosity solutions, $\psi$ satisfies
\begin{equation*}
\frac{1}{\theta} |D \psi |^\theta \leq -1
\end{equation*}
which is a contradiction. Thus $\lambda^*$ is finite.

To complete the proof, we have to show that $(EP)_{\lambda^*}$ has a $C^2$-solution. To do so, we choose any sequence $(\lambda_k)_k$  such that $\lambda_k < \lambda^*$ and $\lambda_k \to \lambda^*$. By the definition of $\lambda^*$ and Corollary~\ref{ins}, $(EP)_{\lambda_k}$ has a $C^2$-solution $\phi_k$. Repeating exactly the arguments of Theorem~\ref{ber} shows that $\hat{\phi}_k:=\phi_k-\phi_k(0)$ converges to a function $\phi$ that is a $C^2$-solution of $(EP)$ with $\lambda = \lambda^*$. 
\end{proof}

We conclude this section by giving immediate properties of the critical value $\lambda^*(f)$.

\begin{proposition}\label{lambda-fp}Assume that $f \in W^{1,\infty}_{loc}(\R^m)$ is bounded from below. Then \\
(i) For any $c \in \mathbb{R}$, $\lambda^*(f+c)=\lambda^*(f)+c$.\\
(ii) (Monotonicity of $\lambda^*$ with respect to $f$)
Suppose that $f_1,f_2\in W^{1,\infty}_{loc}(\R^m)$ are bounded from below. If $f_1 \leq f_2$, then $\lambda^*(f_1) \leq \lambda^*(f_2)$.\\
(iii) (Concavity of $\lambda^*$ with respect to $f$) For any $f_1,f_2\in W^{1,\infty}_{loc}(\R^m)$ which are bounded from below and for any $0\leq t \leq 1$
$$ \lambda^*(tf_1+(1-t)f_2)\geq t\lambda^*(f_1)+(1-t)\lambda^*(f_2)\; .$$
\end{proposition}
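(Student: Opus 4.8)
The plan is to deduce all three statements directly from the definition $\lambda^*(f)=\sup\Lambda(f)$, where $\Lambda(f):=\{\lambda\in\R : (EP)_\lambda \text{ admits a } C^2\text{-subsolution}\}$; by Theorem~\ref{exist-CV} this set is non-empty and bounded above, so $\lambda^*(f)$ is a well-defined real number. The only structural fact I need is that $\phi\in C^2(\R^m)$ is a subsolution of $(EP)_\lambda$ (for the right-hand side $f$) exactly when $-\tfrac12\Delta\phi+\tfrac1\theta|D\phi|^\theta\le f-\lambda$ on $\R^m$. Each of (i)--(iii) then amounts to an elementary manipulation of this inequality followed by taking the supremum.

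For (i), the inequality $-\tfrac12\Delta\phi+\tfrac1\theta|D\phi|^\theta\le f-\lambda$ is unchanged if $(f,\lambda)$ is replaced by $(f+c,\lambda+c)$; hence $\Lambda(f+c)=\Lambda(f)+c$ and $\lambda^*(f+c)=\lambda^*(f)+c$. For (ii), if $f_1\le f_2$ and $\phi$ is a $C^2$-subsolution of $(EP)_\lambda$ for $f_1$, then $-\tfrac12\Delta\phi+\tfrac1\theta|D\phi|^\theta\le f_1-\lambda\le f_2-\lambda$, so $\phi$ is a $C^2$-subsolution of $(EP)_\lambda$ for $f_2$ as well; thus $\Lambda(f_1)\subseteq\Lambda(f_2)$ and $\lambda^*(f_1)\le\lambda^*(f_2)$.

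For (iii) --- the part genuinely using the structure of the equation --- I would fix $0\le t\le1$, take arbitrary $\lambda_1\in\Lambda(f_1)$, $\lambda_2\in\Lambda(f_2)$ with corresponding $C^2$-subsolutions $\phi_1,\phi_2$, and form the convex combination $\phi:=t\phi_1+(1-t)\phi_2\in C^2(\R^m)$. Using linearity of the Laplacian, the triangle inequality $|tD\phi_1+(1-t)D\phi_2|\le t|D\phi_1|+(1-t)|D\phi_2|$, and the fact that $s\mapsto s^\theta$ is non-decreasing and convex on $[0,+\infty)$ (here $\theta>1$ is used), one gets $\tfrac1\theta|D\phi|^\theta\le \tfrac t\theta|D\phi_1|^\theta+\tfrac{1-t}\theta|D\phi_2|^\theta$. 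Adding the two subsolution inequalities with weights $t$ and $1-t$ then yields $-\tfrac12\Delta\phi+\tfrac1\theta|D\phi|^\theta\le (tf_1+(1-t)f_2)-(t\lambda_1+(1-t)\lambda_2)$, i.e. $t\lambda_1+(1-t)\lambda_2\in\Lambda(tf_1+(1-t)f_2)$. Taking the supremum over $\lambda_1\in\Lambda(f_1)$ and $\lambda_2\in\Lambda(f_2)$ (which are independent) gives $\lambda^*(tf_1+(1-t)f_2)\ge t\lambda^*(f_1)+(1-t)\lambda^*(f_2)$.

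I do not expect a real obstacle in this proposition: the content is simply that the Hamiltonian $p\mapsto\tfrac1\theta|p|^\theta$ is convex, which is precisely what forces $\lambda^*$ to be concave in $f$, and no compactness or elliptic estimate is needed, in contrast with the earlier existence statements. The one point to keep straight is that the convex-combination argument in (iii) must be run at the level of genuine $C^2$-subsolutions so that $\phi$ is again an admissible competitor; should one wish to allow merely viscosity subsolutions in the definition of $\lambda^*$, one can either repeat the same inequalities in the viscosity sense or simply replace $\phi_i$ by the $C^2$-solutions of $(EP)_{\lambda_i}$ furnished by Corollary~\ref{ins}.
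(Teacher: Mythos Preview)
Your proof is correct and follows essentially the same route as the paper. The paper only writes out (iii), invoking ``convexity of the equation'' to see that $t\phi_1+(1-t)\phi_2$ is a subsolution for $tf_1+(1-t)f_2$ at level $t\lambda^*(f_1)+(1-t)\lambda^*(f_2)$; the sole cosmetic difference is that the paper takes $\phi_1,\phi_2$ to be solutions at the critical values $\lambda^*(f_i)$ (available by Theorem~\ref{exist-CV}) and concludes directly, whereas you work with arbitrary $\lambda_i\in\Lambda(f_i)$ and pass to the supremum---both reach the same inequality for the same reason.
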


\begin{proof}We just prove (iii). If $\phi_1$ is a solution of $(EP)_\lambda$ for $f_1$ and $\lambda=\lambda^*(f_1)$ and if $\phi_2$ is a solution of $(EP)_\lambda$ for $f_2$ and $\lambda=\lambda^*(f_2)$ then by the convexity of the equation $t\phi_1+(1-t)\phi_2$ is a subsolution of $(EP)_\lambda$ for $tf_1+(1-t)f_2$ and with $t\lambda^*(f_1)+(1-t)\lambda^*(f_2)$. By the definition of $\lambda^*(tf_1+(1-t)f_2)$, we have the concavity inequality.
\end{proof}

The purpose of the next sections is to show, under different assumptions in the sub or superquadratic case, that if $f \in W^{1,\infty}_{loc}(\R^m)$ is bounded from below, then bounded from below solutions of $(EP)_\lambda$ can exist only if  $\lambda=\lambda^*$ and that these solutions actually exist and are unique up to an additive constant.

\subsection{Bounded from below solutions}

Now we turn to the existence of a bounded from below solution of $(EP)$.

\begin{theorem}\label{exist-bfb}  Assume that $f \in W^{1,\infty}_{loc}(\R^m)$ and is coercive, i.e. 
$$f(x)\to +\infty\quad\hbox{ when $|x|\to +\infty$}\; ,$$
then there exists a solution of $(EP)$ which is bounded from below.
\end{theorem}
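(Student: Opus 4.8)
The goal is to produce a subsolution of $(EP)_\lambda$ for some $\lambda$ which grows at infinity, so that the resulting solution from Theorem~\ref{ber}/Theorem~\ref{exist-CV} inherits lower-boundedness. The natural strategy is to build a \emph{global supersolution of large growth} and combine it with a compactness argument, or — more directly — to exhibit, for suitable large $\lambda$, an explicit subsolution $\psi$ of $(EP)_\lambda$ which is bounded from below, and then argue that the solution $\phi$ of $(EP)_\lambda$ produced in Theorem~\ref{ber} must itself be bounded from below. The key mechanism is a comparison-type argument on balls: since $\phi_R$ agrees with $\psi$ on $\partial B_R$ and both solve the same equation inside, if $\psi$ is a subsolution and we can also control $\phi_R$ from below by some fixed function, the limit $\phi$ will be bounded from below.

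First I would set $\lambda_0 := \lambda^*(f)$ (finite by Theorem~\ref{exist-CV}, since coercive implies bounded from below) and recall that $(EP)_{\lambda_0}$ has a $C^2$ solution $\phi$. The real work is to show this particular $\phi$ — or at least \emph{some} solution for \emph{some} $\lambda$ — is bounded below. For this I would look for a smooth radial function $\chi(x) = g(|x|)$ that is a \emph{strict subsolution} of $(EP)_\lambda$ outside a large ball, i.e. such that $-\tfrac12\Delta\chi + \tfrac1\theta|D\chi|^\theta \le f(x) - \lambda$ for $|x| \ge R_0$, with $\chi(x)\to+\infty$ as $|x|\to\infty$; here coercivity of $f$ is exactly what makes the right-hand side large enough to dominate the (polynomially bounded) left-hand side for an appropriate choice of slowly growing $g$. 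Then on each ball $B_R$, I would compare $\phi_R$ (the Dirichlet solution from Theorem~A.1 with boundary data, say, a subsolution $\psi$) against $\chi$ shifted down by a constant: because $\chi$ is a subsolution in the annulus and we can arrange $\chi - c \le \phi_R$ on the inner sphere $\partial B_{R_0}$ and on $\partial B_R$, the comparison principle on the annulus $B_R\setminus \bar B_{R_0}$ gives $\phi_R \ge \chi - c$ there, with $c$ independent of $R$ (using the uniform local bounds on $\phi_R$ near $B_{R_0}$ already established in the proof of Theorem~\ref{ber}). Passing to the limit $R\to\infty$ yields a solution $\phi$ of $(EP)_{\lambda^*}$ with $\phi \ge \chi - c'$, hence bounded from below.

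The main obstacle I anticipate is the construction of the radial strict subsolution $\chi$ that works uniformly at all scales, and matching the "normalization at $0$" used in Theorem~\ref{ber} (recall $\phi_R$ is replaced by $\hat\phi_R = \phi_R - \phi_R(0)$, and the point is precisely that $\phi_R(0)$ might run off to $+\infty$). One must check that the subtracted constant $c$ in the comparison $\hat\phi_R \ge \chi - c$ on $\partial B_{R_0}$ can be taken independent of $R$ — this follows from the uniform $C^{2,\iota}_{loc}$ bounds on $\hat\phi_R$ proved in Theorem~\ref{ber}, which bound $\hat\phi_R$ on $\bar B_{R_0}$ independently of $R$. A secondary subtlety is that $\chi$ need only be a subsolution \emph{outside} $B_{R_0}$, so on $B_{R_0}$ one simply uses the uniform lower bound on $\hat\phi_R$ directly; the two regions are then glued by the comparison on the annulus. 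With care about which $\lambda$ one uses — it is cleanest to carry out the argument at $\lambda = \lambda^*(f)$, reusing the limiting procedure of Theorem~\ref{ber} applied to $\hat\phi_R$ — one concludes.
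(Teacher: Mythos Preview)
Your plan has a real gap at the passage to the limit. The comparison $\phi_R \geq \chi - c$ you set up on the annulus is for the \emph{un-normalised} Dirichlet solution $\phi_R$, whereas the object that actually converges (via the local gradient bounds of Theorem~\ref{ber}) is $\hat\phi_R = \phi_R - \phi_R(0)$. The two are interchangeable only if $\phi_R(0)$ stays bounded above, and you never establish this --- in fact, with coercive Dirichlet data it typically fails. Take $\theta = 2$, $f(x)=|x|^2$, $\lambda = 0$, and the subsolution $\psi(x) = \epsilon|x|^2$ as boundary data: the Hopf--Cole transform $u = e^{-\phi_R}$ solves $\Delta u = 2|x|^2 u$ in $B_R$ with $u = e^{-\epsilon R^2}$ on $\partial B_R$, and since $u$ is subharmonic the maximum principle gives $u \le e^{-\epsilon R^2}$ throughout, i.e.\ $\phi_R(0) \geq \epsilon R^2 \to +\infty$. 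So the lower barrier for $\phi_R$ yields nothing for $\hat\phi_R$, and the limit solution need not be bounded from below --- consistent with the later fact (Proposition~\ref{uni-l-star}) that bounded-from-below solutions live only at $\lambda = \lambda^*$, not at a generic $\lambda$ where your coercive subsolution sits. Working directly at $\lambda = \lambda^*$ does not rescue the argument either: producing a coercive subsolution at $\lambda^*$ is essentially the statement to be proved.

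The paper avoids this by solving, on each $B_R$, the \emph{state-constraint ergodic problem} (results of \cite{T10,BPT10}), in which both $\phi^R$ and the constant $\lambda_R$ are unknowns and the boundary condition forces the minimum of $\phi^R$ to lie in the interior. At that interior minimiser $\bar x_R$ the equation yields $f(\bar x_R) \leq \lambda_R$; since the $\lambda_R$ are uniformly bounded and $f$ is coercive, the points $\bar x_R$ remain in a fixed compact set. With the normalisation $\phi^R(0)=0$, the local gradient estimate then bounds $\phi^R(\bar x_R)$ uniformly, and the global inequality $\phi^R \geq \phi^R(\bar x_R)$ passes to the limit. The missing ingredient in your plan is precisely a mechanism of this kind for localising the minimum \emph{after} normalisation, without presupposing the barrier you are trying to construct.
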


\begin{proof} We use results of Tchamba Tabet \cite{T10} in the superquadratic case (here $\theta>2$) or Porretta, Tchamba and the first author\cite{BPT10} in the subquadratic case (here $\theta\leq 2$). For $R>0$, we consider the ergodic problem
$$
-\frac{1}{2} \Delta \phi^{R} +\frac{1}{\theta} |D\phi^{R}|^\theta =f -\lambda_R \text{ in } B_R\; ,
$$
with state constraint boundary conditions: we recall that $\phi^{R}(x) \to +\infty$ when $x$ tend to $\partial B_R$ in the subquadratic case ($\theta \leq 2$) while the boundary condition just means that a minimum point of $\phi^{R}-\chi$ cannot be on the boundary if $\chi$ is a smooth function in $\R^m$. We may also assume that $\phi^{R}(0)=0$. By using results or arguments of \cite{T10,BPT10}, it is easy to see that, if $R\leq R'$
$$\lambda_R\geq \lambda_{R'} \geq \lambda^*(f)\; ,$$
and therefore the $\lambda_R$ are uniformly bounded for $R\geq 1$.

Before letting $R$ tend to $+\infty$, we show that, at least for large $R$, the minimum of $\phi^{R}$ is achieved in a fixed compact subset of $\R^m$ (independent of $R$). To do so, we consider $\min_{\bar{B}_R}(\phi^{R}(x))$ which is achieved at a point denoted by $\bar x_R$.  At the point $\bar x_R$ which is in $B_R$ by the properties of $\phi^R$ we recalled above, we have
$$ 0 \geq f (\bar x_R) -\lambda_R \; .$$
i.e $f (\bar x_R) \leq \lambda_R$. The $\lambda_R$ being uniformly bounded, this means that the $\bar x_R$ remain uniformly bounded, independently of $R$ since $f$ is coercive.

Next using the same arguments as in the proof of Theorem~\ref{ber}, it is easy to show that, up to a subsequence, $\phi^{R}$ converges in $C^2(\R^m) $ to a function $\phi$ which solves
$$
-\frac{1}{2} \Delta \phi +\frac{1}{\theta} |D\phi |^\theta =f -\lambda \text{ in } \R^m\; ,
$$
with $\lambda = \lim_R (\lambda_R)$. 

Finally we have the inequality
$$ \phi^{R}(\bar x_R) \leq \phi^{R}(x) \quad \hbox{for any  }x \in \bar{B}_R\; ,$$
and up to a subsequence we can assume that we have both a local uniform convergence for $\phi_R$ and $\bar x_R \to \bar x$. By passing to the limit we obtain
$$ \phi(\bar x) \leq \phi(x) \quad \hbox{for any  }x \in \R^m\; ,$$
showing that $\phi$ is bounded from below.
\end{proof}

\section{Uniqueness results for the ergodic problem}\label{Uni}

Unfortunately the proof of these uniqueness results are rather different in the subquadratic case ($\theta< 2$) or superquadratic case ($\theta \geq 2$). This is why we have to consider them separately.

\subsection{The Superquadratic Case}

The first uniqueness result for bounded from below solutions of $(EP)$ in the superquadratic case is the
\begin{theorem}\label{uni-sup} Assume that $f \in W^{1,\infty}_{loc}(\R^m)$ is coercive and $\theta \geq 2$. If $(\lambda_1, \phi)$ and $(\lambda_2, \psi)$ are two solutions of $(EP)$ such that $\phi$ and $\psi$ are bounded from below, then $\lambda_1=\lambda_2$ and there exists a constant $C \in \R$ such that $\phi=\psi+C$. 
\end{theorem}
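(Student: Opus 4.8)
Since adding constants to $\phi$ and $\psi$ affects neither the equations nor the hypotheses, I would first normalize $\inf_{\R^m}\phi=\inf_{\R^m}\psi=0$ and, exchanging the two solutions if necessary, assume $\lambda_1\geq\lambda_2$; put $w:=\phi-\psi\in C^2(\R^m)$ and $\delta:=\lambda_1-\lambda_2\geq0$. Subtracting the two equations gives $-\frac12\Delta w+\frac1\theta(|D\phi|^\theta-|D\psi|^\theta)=-\delta$ in $\R^m$, and since $p\mapsto\frac1\theta|p|^\theta$ is convex one has $\frac1\theta|D\phi|^\theta-\frac1\theta|D\psi|^\theta\geq|D\psi|^{\theta-2}D\psi\cdot Dw$, so that, writing $\mathcal{L}u:=-\frac12\Delta u+|D\psi|^{\theta-2}D\psi\cdot Du$ (a locally uniformly elliptic operator with locally bounded drift),
\[\mathcal{L}w\leq-\delta\leq0\qquad\text{in }\R^m.\]
The key feature of $\mathcal L$ is that $\psi$ is a \emph{strict supersolution of it at infinity}: inserting the equation for $\psi$, namely $-\frac12\Delta\psi=f-\lambda_2-\frac1\theta|D\psi|^\theta$, one gets
\[\mathcal{L}\psi=-\frac12\Delta\psi+|D\psi|^\theta=f-\lambda_2+\Big(1-\frac1\theta\Big)|D\psi|^\theta\geq f-\lambda_2,\]
which tends to $+\infty$ as $|x|\to\infty$ because $\theta>1$ and $f$ is coercive.

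I would then run a localization argument on the perturbed functions $w_\epsilon:=w-\epsilon\psi=\phi-(1+\epsilon)\psi$, $\epsilon>0$. \emph{Granting for the moment} that $w_\epsilon\to-\infty$ as $|x|\to\infty$, $w_\epsilon$ attains a global maximum at some point $x_\epsilon$, where $Dw(x_\epsilon)=\epsilon D\psi(x_\epsilon)$ and $\Delta w(x_\epsilon)\leq\epsilon\Delta\psi(x_\epsilon)$. Substituting this into $\mathcal{L}w\leq-\delta$ at $x_\epsilon$ and using the equation for $\psi$ once more yields
\[\epsilon\big(f(x_\epsilon)-\lambda_2\big)+\epsilon\Big(1-\frac1\theta\Big)|D\psi(x_\epsilon)|^\theta\leq\mathcal{L}w(x_\epsilon)\leq-\delta,\]
hence $f(x_\epsilon)\leq\lambda_2-\delta/\epsilon$, an inequality in the spirit of the bound $f(\bar x_R)\leq\lambda_R$ used in the proof of Theorem~\ref{exist-bfb}. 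Since $f$ is bounded from below, this is impossible for small $\epsilon$ when $\delta>0$; hence $\delta=0$, i.e. $\lambda_1=\lambda_2$. With $\delta=0$ the inequality becomes $f(x_\epsilon)\leq\lambda_2$, so the points $x_\epsilon$ all lie in the fixed compact set $\{f\leq\lambda_2\}\subset\bar{B}_{R_0}$; consequently $\phi(x)-(1+\epsilon)\psi(x)\leq\max_{\bar{B}_{R_0}}\phi-(1+\epsilon)\min_{\bar{B}_{R_0}}\psi$ for all $x$, and letting $\epsilon\to0$ gives $\phi\leq\psi+C_0$ on $\R^m$. By the symmetric argument (legitimate now that $\lambda_1=\lambda_2$) one also gets $\psi\leq\phi+C_0'$, so $w$ is bounded. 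Finally, extracting $x_\epsilon\to x_*\in\bar{B}_{R_0}$ and passing to the limit in $w(x)-\epsilon\psi(x)\leq w(x_\epsilon)-\epsilon\psi(x_\epsilon)$ shows that $w$ attains its supremum at the interior point $x_*$; as $\mathcal{L}w\leq0$, the strong maximum principle forces $w\equiv C$ for some constant $C$, and the proof is complete.

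The step I granted above is the genuine obstacle: one has to show that a bounded-from-below solution of $(EP)$ grows at infinity and that two such solutions grow at essentially the same rate, so that $\phi-(1+\epsilon)\psi$ (and, with the roles reversed, $\psi-(1+\epsilon)\phi$) does tend to $-\infty$. This is exactly where the superquadratic assumption $\theta\geq2$ is used: the Bernstein-type gradient estimate of Theorem~A.2 bounds the growth of any solution from above in terms of $f$ only, while a barrier construction available for $\theta\geq2$ supplies a matching lower bound for bounded-from-below solutions. (In the subquadratic case this breaks down, which is the reason for the extra assumption that $f$ behaves like $|y|^\alpha$ there.) An alternative organization of this input is to go through the associated parabolic equation $\partial_t u-\frac12\Delta u+\frac1\theta|Du|^\theta=f$, whose superquadratic Hamiltonian yields a comparison principle valid even for merely bounded-from-below solutions, and to compare $\phi(x)+\lambda_1 t$ with $\psi(x)+\lambda_2 t$.
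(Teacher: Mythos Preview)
Your approach is genuinely different from the paper's: you linearize around $\psi$ and perturb by $-\epsilon\psi$, whereas the paper performs the Hopf--Cole change $z=-e^{-\phi}$ and applies a $\delta|y|^2$-penalization to the \emph{bounded} difference $z_1-z_2$ (Proposition~\ref{comp-ext}), followed by the strong maximum principle inside $B_R$. The chain of computations you carry out \emph{after} the granted step is correct.

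The gap is precisely the step you grant, and it is a real one. Under the hypotheses of Theorem~\ref{uni-sup} (only $f$ coercive, no power-type control), nothing in the paper guarantees that two bounded-from-below solutions have comparable growth at infinity, and you do not supply such an argument. Observe that the remainder of your proof makes no use of $\theta\geq2$: the drift $|D\psi|^{\theta-2}D\psi$ is continuous and locally bounded for every $\theta>1$, and the key inequality $\mathcal{L}\psi\geq f-\lambda_2$ also holds for every $\theta>1$. Hence, if your granted step were provable from mere coercivity of $f$, your argument would give uniqueness for \emph{all} $\theta>1$ without (H0)--(H1), strictly more than the paper (or Ichihara) obtains. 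Your claim that ``a barrier construction available for $\theta\geq2$ supplies a matching lower bound'' is not substantiated; the matching growth bounds that the paper does prove (Propositions~\ref{bainf1} and~\ref{batinf}) are valid for every $\theta>1$ but require (H0)--(H1). The parabolic alternative you sketch faces the same difficulty when comparing the initial data.

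The point of the paper's Hopf--Cole route is exactly to avoid this growth comparison: boundedness from below of $\phi,\psi$ becomes boundedness of $z_1,z_2$, so a $\delta|y|^2$-penalization yields a maximum with no information whatsoever on how $\phi,\psi$ grow. In the paper, the hypothesis $\theta\geq2$ enters not through growth estimates but through the algebra of the transformed nonlinearity $N$: only for $\theta\geq2$ can one control $|\partial N/\partial Y|$ by $\partial N/\partial X$ (see the case split $\theta=2$ versus $\theta>2$ in the proof of Proposition~\ref{comp-ext}).
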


Before providing the proof of this result, we have to state and prove an intermediate result. To do so, we introduce a key ingredient in the superquadratic case, namely the Hopf-Cole transformation $z=-e^{-\phi}$. If  $(\lambda, \phi)$ is a solution (resp. subsolution, supersolution) of $(EP)$ then $z(y)=-e^{-\phi(y)}$ is a solution (resp. subsolution, supersolution) of
\begin{equation}\label{T1}
-\frac{1}{2} \Delta z + N(y,z,D z) =0\quad\hbox{in  }\R^m\; ,
\end{equation}
where $N(y,z,Dz):= z \big( \frac{1}{2} \big|\frac{D z}{z} \big|^2-\frac{1}{\theta}\big| \frac{D z}{z} \big|^\theta+f-\lambda \big)$.\\

This transformation allows to take care of the problem at infinity in the following way.

\begin{proposition}\label{comp-ext} Assume that $f \in W^{1,\infty}_{loc}(\R^m)$ is coercive and $\theta \geq 2$. If $\phi$, $\psi$ are respectively a bounded from below subsolution and a bounded from below supersolution of $(EP)_\lambda$, there exists $R > 0$ depending only on $f$ and $\lambda$ such that: if $z_1=-e^{-\phi}$ and $z_2=-e^{-\psi}$ are such that $z_1 \leq z_2$ on $\partial B_R$ we have 
\begin{equation*}
z_1(y) \leq z_2(y) \text{ for all } y \in B^c_R.
\end{equation*}
\end{proposition}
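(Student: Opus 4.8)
The plan is to use the Hopf--Cole transformation, exactly as advertised just before the statement, so as to reduce the assertion to a comparison principle in the exterior domain $B_R^c$ for the semilinear equation~\eqref{T1}. Set $z_1=-e^{-\phi}$ and $z_2=-e^{-\psi}$; these are respectively a subsolution and a supersolution of~\eqref{T1}. The decisive gain of this change of variables is that, since $\phi$ and $\psi$ are bounded from below, both $z_1$ and $z_2$ take values in a fixed interval $[-K,0)$; in particular $z_1-z_2$ is bounded \emph{from above} on all of $\R^m$, which is precisely the piece of information missing for $\phi-\psi$.

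The first step is to observe that, thanks to $\theta\geq2$ and to the coercivity of $f$, the nonlinearity $N$ becomes \emph{proper} (uniformly increasing in the $z$-variable) far from the origin. Writing $N(y,z,p)=z\,g(|p/z|)+z\big(f(y)-\lambda\big)$ with $g(t):=\tfrac12 t^{2}-\tfrac1\theta t^{\theta}$, a direct computation gives, for $z<0$,
\[
\partial_z N(y,z,p)=-\tfrac12\Big|\frac pz\Big|^{2}+\frac{\theta-1}{\theta}\Big|\frac pz\Big|^{\theta}+f(y)-\lambda\;\geq\;f(y)-\lambda-C_\theta,
\]
where $C_\theta:=\sup_{t\geq0}\big(\tfrac12 t^{2}-\tfrac{\theta-1}{\theta}t^{\theta}\big)$ depends only on $\theta$ and is \emph{finite precisely because} $\theta\geq2$ (with $C_\theta=0$ when $\theta=2$). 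Since $f$ is coercive one may then fix $R=R(f,\lambda)$ so that $f(y)-\lambda\geq C_\theta+1$ on $B_R^c$, whence $\partial_z N(y,z,p)\geq1$ there, uniformly in $z<0$ and $p\in\R^m$; this is the ``properness at infinity'' that replaces the usual maximum principle.

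The comparison itself then goes by contradiction: assume $\sup_{B_R^c}(z_1-z_2)>0$. Since $z_1-z_2$ is bounded from above, one penalizes with a slowly growing function, $z_1-z_2-\delta\beta$, where $\beta\geq0$ is smooth, vanishes on $\partial B_R$, tends to $+\infty$ at infinity while $|D\beta|,|D^{2}\beta|\to0$ (for instance $\beta(y)=\log(|y|/R)$, so $|D\beta(y)|=|y|^{-1}$). For every small $\delta$ the supremum of $z_1-z_2-\delta\beta$ over $\overline{B_R^c}$ is then attained at an interior point $\bar y=\bar y_\delta$ and is still positive, so in particular $f(\bar y)-\lambda$ is large. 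Writing the sub/supersolution inequalities at $\bar y$ --- by the classical maximum principle when $\phi,\psi$ are $C^2$, and through the Jensen--Ishii lemma after a further doubling of the space variable otherwise --- together with $\Delta z_1(\bar y)\leq\Delta z_2(\bar y)+\delta\,\Delta\beta(\bar y)$, one gets
\[
N\big(\bar y,z_1(\bar y),p_1\big)-N\big(\bar y,z_2(\bar y),p_2\big)\ \leq\ \tfrac\delta2\,\Delta\beta(\bar y),\qquad p_1=p_2+\delta D\beta(\bar y).
\]
Splitting the left-hand side as $\big[N(\bar y,z_1,p_1)-N(\bar y,z_2,p_1)\big]+\big[N(\bar y,z_2,p_1)-N(\bar y,z_2,p_2)\big]$, the first bracket is bounded below by $(z_1-z_2)(\bar y)>0$ by the monotonicity above, while the second is an error produced solely by the gradient shift $\delta D\beta(\bar y)$ and is of order $\delta\,|\bar y|^{-1}\big(1+|D\psi(\bar y)|^{\theta-1}\big)$. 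Bounding $|D\psi(\bar y)|$ by the gradient estimates of Theorem~A.2 and using once more that $f(\bar y)-\lambda$ is large, one checks that the ``good'' quantity $(z_1-z_2)(\bar y)\big(f(\bar y)-\lambda\big)$ absorbs all the error terms; letting $\delta\to0$ then forces $\sup_{B_R^c}(z_1-z_2)\leq0$, the desired contradiction.

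I expect the delicate point to be precisely this last balance of terms. The penalization $\delta\beta$, needed to localize the maximum, unavoidably perturbs the gradient; and since no growth restriction is imposed on $f$ in the superquadratic case, the resulting error features $|D\psi(\bar y)|$ at a point $\bar y_\delta$ which may a priori run off to infinity as $\delta\to0$. Making this quantitative is where one has to combine the (local) Bernstein estimates of Theorem~A.2 with the superquadratic structure of $N$ --- it is exactly $\theta\geq2$ that makes the $p$-dependent part of $N$ grow at infinity strictly slower than the zeroth-order term $z(f-\lambda)$ --- and to choose $\beta$ growing as slowly as possible, so that $|D\beta(\bar y)|\lesssim|\bar y|^{-1}$ decays. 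When $\theta=2$ the whole difficulty disappears: \eqref{T1} is then the linear equation $-\tfrac12\Delta z+(f-\lambda)z=0$, whose zeroth-order coefficient is positive on $B_R^c$, and the statement reduces to the classical Phragm\'en--Lindel\"of maximum principle for a bounded solution.
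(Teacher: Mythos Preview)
Your overall plan matches the paper's: Hopf--Cole, penalize, reach a contradiction at an interior maximum. The gap is in your treatment of the gradient--shift error.

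When you split
\[
N(\bar y,z_1,p_1)-N(\bar y,z_2,p_2)=\big[N(\bar y,z_1,p_1)-N(\bar y,z_2,p_1)\big]+\big[N(\bar y,z_2,p_1)-N(\bar y,z_2,p_2)\big],
\]
your estimate ``the second bracket is of order $\delta|\bar y|^{-1}(1+|D\psi(\bar y)|^{\theta-1})$'' is not correct. The derivative $\partial_p N(y,z_2,p)$ is controlled by $|p/z_2|+|p/z_2|^{\theta-1}$, and along the segment from $p_2$ to $p_1$ one has $p/z_2=-D\psi+t\,\delta D\beta/z_2$. The extra piece $\delta D\beta/z_2$ carries a factor $|z_2|^{-1}=e^{\psi(\bar y)}$, which is \emph{not} bounded (indeed $\psi\to+\infty$ along typical sequences), so the second bracket really contains terms of the type $\delta^{\theta}|D\beta|^{\theta}/|z_2|^{\theta-1}$. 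The gradient estimate of Theorem~A.2 bounds $|D\psi|$, not $e^{\psi}$, so it cannot rescue this. Put differently: after Hopf--Cole the zero--order term behaves well, but the first--order nonlinearity becomes singular as $z\to 0^{-}$, and your splitting exposes exactly this singularity.

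The paper avoids the problem by interpolating \emph{simultaneously} in $(z,p)$ along a single path $t\mapsto(X(t),Y(t))$ and proving the \emph{pointwise} comparison $|\partial_Y N|\le 6\,\partial_X N$ along that path (this is where $\theta\ge2$ and the coercivity of $f$ enter, yielding $\partial_X N\ge\frac13(1+|Q|^{\theta})$ and $|\partial_Y N|\le 2(1+|Q|^{\theta})$ with $Q=Y/X$). With the penalization $\delta|y|^{2}$ one then gets
\[
\int_0^1 \partial_X N\;\Big(M_R^{\delta}+\delta|y^*_\delta|^{2}-12\,\delta|y^*_\delta|\Big)\,dt\ \le\ \delta m,
\]
and since $M_R^{\delta}\to M_R>0$ while $\delta|y^*_\delta|^{2},\ \delta|y^*_\delta|\to 0$, a contradiction follows \emph{without any absolute bound on} $|Q|$. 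The point is that the ``bad'' term is a small multiple of the same integrand as the ``good'' term; your two--bracket split decouples them and loses this cancellation. (Your observation that $\theta=2$ collapses to the linear equation $-\tfrac12\Delta z+(f-\lambda)z=0$ is correct and matches the paper.)
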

\begin{proof}We first notice that since $\phi$, $\psi$ are bounded from below, the functions $z_1,z_2$ are bounded. We argue by contradiction assuming that $M_R:=\sup_{B^c_R} (z_1(y)-z_2(y))>0$ and for $0<\delta \ll 1$, we consider $M_R^\delta :=\sup_{B^c_R} (z_1(y)-z_2(y)-\delta |y|^2)$. We know that $M_R^\delta \to M_R$ as $\delta \to 0$ and therefore $M_R^\delta >0$ if $\delta$ is small enough.

Since $(z_1-z_2)(y)-\delta |y|^2 \to -\infty$ as $|y| \to +\infty$, there exists a maximum point $y^*_\delta$ in $\bar{B}^c_R$. We cannot have $y^*_\delta \in \partial B_R$ because, by our hypothesis, $z_1(y^*_\delta)-z_2(y^*_\delta) \leq 0$ and we would get that
\begin{equation*}
M_R^\delta= z_1(y^*_\delta)-z_2(y^*_\delta)-\delta|y^*_\delta|^2 \leq 0.
\end{equation*}
Therefore $y^*_\delta$ is not on the boundary and we have
\begin{equation*}
D (z_1-z_2) (y^*_\delta) = 2 \delta y^*_\delta
\quad\hbox{and}\quad
\Delta (z_1 - z_2)(y^*_\delta) \leq 2 \delta m.
\end{equation*}
On the other hand
\begin{equation*}
-\frac{1}{2} \Delta z_1 (y^*_\delta) +N(y^*_\delta,z_1(y^*_\delta),D_y z_1 (y^*_\delta)) \leq 0
\end{equation*}
and
\begin{equation*}
-\frac{1}{2} \Delta z_2 (y^*_\delta)+N(y^*_\delta,z_2(y^*_\delta),D_y z_2 (y^*_\delta)) \geq 0 .
\end{equation*}
Subtracting the second from the first inequality, we  arrive at
\begin{equation*}
-\frac{1}{2} \Delta (z_1-z_2)(y^*_\delta)+N(y^*_\delta, z_1(y^*_\delta) , D z_1(y^*_\delta) ) - N(y^*_\delta, z_2(y^*_\delta) , D z_2(y^*_\delta) ) \leq 0,
\end{equation*}
i.e.,
\begin{equation} \label{T4}
N(y^*_\delta, z_1(y^*_\delta) , D z_2(y^*_\delta)+2 \delta y^*_\delta ) - N(y^*_\delta, z_2(y^*_\delta) , D z_2(y^*_\delta)) \leq \delta m.
\end{equation}\\
Let $t \in [0,1]$ and define $X(t):= t z_1(y^*_\delta) +(1-t) z_2(y^*_\delta)$, $Y(t):=D z_2(y^*_\delta)+2 t \delta y^*_\delta$ and $h(t) := N(y^*_\delta, X(t) , Y(t))$.
Then,
\begin{align*}
&N(y^*_\delta, z_1(y^*_\delta) , D z_2(y^*_\delta)+2 \delta y^*_\delta ) - N(y^*_\delta, z_2(y^*_\delta) , D z_2(y^*_\delta)) \\
&= N(y^*_\delta, X(1), Y(1))-N(y^*_\delta, X(0), Y(0))\\
&=h(1)-h(0)\\
&=\int^1_0 h'(t)dt
\end{align*}
and
\begin{equation*}
h'(t)=\frac{\partial N}{\partial X} (y^*_\delta, X(t),Y(t)) (M_R^\delta+\delta |y^*_\delta|^2) +\frac{\partial N}{\partial Y} (y^*_\delta, X(t),Y(t)) \cdot (2 \delta y^*_\delta).
\end{equation*}
Thus inequality $(\ref{T4})$ can be re-written as
\begin{equation} \label{T5}
\int_0^1 \big[ \frac{\partial N}{\partial X} (y^*_\delta, X(t),Y(t)) (M_R^\delta+\delta |y^*_\delta|^2) +\frac{\partial N}{\partial Y} (y^*_\delta, X(t),Y(t)) \cdot (2 \delta y^*_\delta) \big]  \leq \delta m.
\end{equation}
Set $Q:=\frac{Y}{X}$. We have
 \begin{align*}
\frac{\partial N}{\partial X} (y,X,Y) = -\frac{1}{2} \big| Q \big|^2 +(1-\frac{1}{\theta}) \big| Q \big|^\theta + \big( f(y) -\lambda \big)
\end{align*}
and
\begin{align*}
\frac{\partial N}{\partial Y} (y,X,Y) &= X \big( \frac{Y}{X^2} - \frac{|Y|^{\theta-2}}{|X|^\theta} Y \big) = Q- \frac{X |Y|^{\theta-2}}{|X|^\theta} Y \\
& =Q+ \frac{|Y|^{\theta-2}}{|X|^{\theta-1}} Y 
\end{align*}
where we used $|X|=-X$ since $z_1,z_2 \leq 0$.
\begin{itemize}
\item Case $\theta=2$
\end{itemize}
In this case, $\frac{\partial N}{\partial X} (y,X,Y)= f(y)-\lambda$ and $\frac{\partial N}{\partial Y} (y,X,Y)=0$. Then, $(\ref{T5})$ is reduced to
\begin{equation*} 
(f(y^*_\delta)-\lambda) (M_\delta+\delta|y^*_\delta|^2) \leq \delta m
\end{equation*}
a contradiction if $\delta$ is small enough because we can choose $R$ large enough in order to have $f(y^*_\delta)-\lambda \geq 1$ and $M_R^\delta \geq M_R/2$.
\begin{itemize}
\item Case $\theta>2$
\end{itemize}
First we notice, by Cauchy-Schwarz inequality, that
\begin{align*} 
 \frac{\partial N}{\partial Y} \cdot (2 \delta y^*_\delta ) \geq -  2 \delta \big| \frac{\partial N}{\partial Y}  \big| \big| y^*_\delta \big|.
\end{align*}
Therefore $(\ref{T5})$ implies
\begin{align} \label{T6}
\int_0^1 \big[ \frac{\partial N}{\partial X} (y^*_\delta, X(t),Y(t)) (M_R^\delta+\delta |y^*_\delta|^2) -  2 \delta \big| \frac{\partial N}{\partial Y} (y^*_\delta, X(t),Y(t)) \big| \big| y^*_\delta \big| \big]  \leq \delta m.
 \end{align}\\
It is easy to see that, for $R$ large enough, since $\displaystyle 1-\frac{1}{\theta} >1/2$
\begin{align*}
\frac{\partial N}{\partial X} (y,X,Y) &=-\frac{1}{2} |Q|^2+ (1-\frac{1}{\theta}) |Q|^\theta + f(y)-\lambda \\
& \geq \frac{1}{3} (1+ |Q|^\theta)
\end{align*}
 for all $|y| \geq R$. Indeed, one can combine Young's inequality for the $ |Q|^2$-term with the fact that  $f(y)-\lambda$ can be taken as large as we wish by choosing $R$ large enough. 
 
 On the other hand, we have
\begin{align*}
\big| \frac{\partial N}{\partial Y} (y^*_\delta, X,Y) \big| \leq \big| Q \big| + \big| Q \big|^{\theta-1} \leq 2 \big(1+\big| Q \big|^{\theta} \big).
\end{align*}
Indeed, the first inequality follows immediately from the computation of $\frac{\partial N}{\partial Y} (y, X,Y)$ while the second comes by noticing that if $\big| Q \big| \leq 1$ then $\big| Q \big| + \big| Q \big|^{\theta-1} \leq 2$ and if $\big| Q \big| > 1$ then $\big| Q \big| < \big| Q \big|^{\theta-1} < \big| Q \big|^\theta$ because $\theta>2$. \\ \\
Hence, combining these two properties, we obtain
\begin{align*}
\big| \frac{\partial N}{\partial Y} (y^*_\delta, X,Y) \big|  \leq 6 \frac{\partial N}{\partial X} (y^*_\delta, X,Y).
\end{align*}
Then $(\ref{T6})$ implies
\begin{align*} 
&\int_0^1 \big[ \frac{\partial N}{\partial X} (y^*_\delta, X(t),Y(t)) (M_R^\delta+\delta |y^*_\delta|^2) -  12 \delta \frac{\partial N}{\partial X} (y^*_\delta, X(t),Y(t)) \big| y^*_\delta \big| \big]  \leq \delta m 
\end{align*}
\begin{align} \label{T7}
 \int_0^1  \frac{\partial N}{\partial X} (y^*_\delta, X(t),Y(t)) (M_R^\delta + \delta |y^*_\delta|^2 -  12 \delta | y^*_\delta | )  \leq \delta m .
 \end{align}
Given that $M_R^\delta \to M_R$, $\delta |y^*_\delta|^2 \to 0$ and $\delta | y^*_\delta | \to 0$ as $\delta \to 0$, we can see that
\begin{equation*}
M_R^\delta + \delta |y^*_{\delta}|^2 -  12 \delta \big| y^*_{\delta} \big| \to  M_R>0 \text{ when $\delta \to 0$}.
\end{equation*}
For $ \delta$ small enough, this gives a contradiction because $\frac{\partial N}{\partial X} (y^*_{\delta_0}, X(t),Y(t))\geq 1/3$. \end{proof}

\begin{Remark}:  If $\phi, \psi \to +\infty$ at infinity then $z_1=-e^{-\phi}$ and $z_2=-e^{-\psi}$ tend to $0$ as $|y| \to \infty$. Consequently $\max_{y \in B^c_R} \{ z_1(y) - z_2 (y) \}$ exists and we do not need to use the penalisation term $\delta|y|^2$.
\end{Remark}

\begin{proof}[Proof of Theorem~\ref{uni-sup}]
Let $R>1$ be as in Proposition~\ref{comp-ext} and suppose that $\lambda_1 \geq \lambda_2$ (otherwise we exchange the roles of $\phi$ and $\psi$ in the argument). Observe that this implies that $(\lambda_1, \phi)$ is a subsolution of $(EP)_{\lambda_2}$. We will first prove that $\phi=\psi$ in $\mathbb{R}^m$ and then conclude that $\lambda_1=\lambda_2$.

Adding constants to $\phi$ and $\psi$ we may assume that $\max_{\partial B_R } (\phi-\psi)=0$ and, if $z_1=-e^{-\phi}$ and $z_2=-e^{-\psi}$, we have $z_1 \leq z_2$ on $\partial B_R$.  We deduce from Proposition~\ref{comp-ext} that
$$
z_1(y) \leq z_2(y) \text{ for all } y \in B^c_R.
$$
and this gives $\phi (y) \leq \psi (y) \text{ for all } y \in B^c_R$.

On the other hand, in $B_R$, applying the Strong Maximum Principle, we have
\begin{equation} \label{Prop4.2-4}
\max_{\bar B_R} (\phi-\psi) = \max_{\partial B_R}  (\phi-\psi)=0. 
\end{equation}
But this means that the global maximum of $\phi-\psi$ in $\R^m$ is achieved at a point of $\partial B_R$ and applying again the Strong Maximum Principle, we deduce that $\phi=\psi$ in $\R^m$ (since the max is actually $0$), and therefore $\lambda_1=\lambda_2$.
\end{proof}

\subsection{The Subquadratic case}

Unfortunately the result in the subquadratic case is not as general as in the superquadratic one : we have to impose some restriction on the growth of $f$ at infinity.

\subsection{Existence for right-hand sides with polynomial growth}

We assume, in this section, that $f$ satisfies
\begin{itemize}
	\item[(H0)] $f \in W^{1,\infty}_{\text{loc}}(\mathbb{R}^m)$ is bounded from below and there exists $\alpha, f_0 > 0$ such that, for all $y \in \mathbb{R}^m$
$$
|Df(y)| \leq f_0 (1+|y|^{\alpha-1})\quad \hbox {if  $\alpha \geq 1$}\quad \hbox{or  }\quad |Df(y)| \leq f_0 \quad \hbox {if  $\alpha < 1$}\; .
$$
\end{itemize} 

In this case, we have a precise estimate on the growth of the solutions of $(EP)$.
\begin{proposition}\label{bainf1} Let $(\lambda, \phi)$ be a solution of (EP). Then there exists a constant $K>0$ such that
\begin{equation*}
 | D \phi (y)| \leq K (1+| y|^{\gamma-1}), \quad  | \phi (y)| \leq K(1+| y|^\gamma), \quad y \in \mathbb{R}^m,
\end{equation*}
where $\displaystyle \gamma = \frac{\alpha}{\theta}+1$.
\end{proposition}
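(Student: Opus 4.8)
The plan is to turn the interior (Bernstein-type) gradient estimate of Theorem~A.2 into a global polynomial bound by applying it on balls of a fixed radius centered at an arbitrary point, and then to recover the bound on $\phi$ itself by integrating the gradient bound along rays from the origin.

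Concretely, I would fix $y_0\in\R^m$ and regard $\phi$ as a solution of the Dirichlet problem $G[\phi]=-\lambda$ in $B_3(y_0)$ with its own trace as boundary data. Since the equation is invariant under translations of the space variable — the only $x$-dependence being through $f$ — and since the constant in Theorem~A.2 at the fixed radius $R'=1$ depends only on $\theta$ and $m$, this gives a constant $C=C(\theta,m)$ with
\[
|D\phi(y_0)|\le \sup_{\bar B_1(y_0)}|D\phi|\le C\Big(1+\sup_{\bar B_2(y_0)}|f-\lambda|^{1/\theta}+\sup_{\bar B_2(y_0)}|Df|^{1/(2\theta-1)}\Big).
\]
Using (H0) one has $\sup_{\bar B_2(y_0)}|f-\lambda|\le C(1+|y_0|^{\alpha})$ and $\sup_{\bar B_2(y_0)}|Df|\le C(1+|y_0|^{\alpha-1})$ (the latter bounded when $\alpha<1$), so that, since $1/\theta<1$ and $1/(2\theta-1)<1$,
\[
|D\phi(y_0)|\le C\big(1+|y_0|^{\alpha/\theta}+|y_0|^{(\alpha-1)/(2\theta-1)}\big).
\]

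Next I would check that the second exponent is dominated by the first: $\frac{\alpha-1}{2\theta-1}\le\frac{\alpha}{\theta}$ is, after clearing the positive denominators, equivalent to $\alpha(\theta-1)+\theta\ge0$, which holds because $\alpha>0$ and $\theta>1$. Hence $|D\phi(y_0)|\le K(1+|y_0|^{\alpha/\theta})=K(1+|y_0|^{\gamma-1})$ with $\gamma=\frac{\alpha}{\theta}+1$, which is the first claimed estimate. For the second, for $y\in\R^m$ I would write $\phi(y)-\phi(0)=\int_0^1 D\phi(ty)\cdot y\,dt$, whence $|\phi(y)-\phi(0)|\le|y|\int_0^1 K\big(1+t^{\gamma-1}|y|^{\gamma-1}\big)\,dt=K|y|+\tfrac{K}{\gamma}|y|^{\gamma}$; since $\gamma\ge1$ we have $|y|\le1+|y|^{\gamma}$, and absorbing the constant $|\phi(0)|$ gives $|\phi(y)|\le K(1+|y|^{\gamma})$ after renaming $K$.

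The only point that needs a little attention is the legitimacy of this ``translated'' interior estimate — namely that the constant provided by Theorem~A.2 at radius $R'=1$ is uniform in the center $y_0$ and enters $f$ only through local suprema — but this is immediate from the translation invariance of the equation (the ball $B_3(y_0)$ playing the role of $B_R$ with $R'=1$ fixed). Beyond that, the proof is a purely quantitative repackaging of the Bernstein estimate already used in the proof of Theorem~\ref{ber}, so I do not expect a genuine obstacle, only the exponent bookkeeping above.
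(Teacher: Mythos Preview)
Your proof is correct and follows essentially the same approach as the paper: both apply the interior Bernstein estimate (Theorem~A.2) to obtain a pointwise gradient bound of order $|y|^{\alpha/\theta}=|y|^{\gamma-1}$ from the growth of $f$ and $Df$ dictated by (H0), check the exponent inequality $\frac{\alpha-1}{2\theta-1}<\gamma-1$, and then integrate to recover the bound on $\phi$. The only cosmetic difference is that the paper applies the estimate on concentric balls $B_r\subset B_{r+1}$ of growing radius, whereas you apply it on translated balls of fixed radius centered at an arbitrary point; the content is identical.
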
 
\begin{proof}
From Corollary A.3 in Appendix A, we have for all $r>0$ that there exists a constant $C>0$ such that
\begin{equation*}
\sup_{B_r} |D \phi(y)| \leq C(1+\sup_{B_{r+1}}|f(y)-\lambda|^{\frac{1}{\theta}}+\sup_{B_{r+1}}|Df(y)|^{\frac{1}{2\theta-1}}).
\end{equation*}
Hypothesis (H0) implies that $f$ grows at most like $|y|^\alpha$. Since $\frac{\alpha}{\theta}=\gamma-1$ and $\frac{\alpha-1}{2\theta-1} < \gamma-1$ if $\alpha \geq 1$ or $|Df(y)|$ is bounded if $\alpha <1$, we see that
\begin{equation*}
\sup_{B_r} |D \phi| \leq C( 1+ r^{\gamma-1} )
\end{equation*}
for some (possibly different) constant $C>0$. From this inequality we deduce the first estimate of this proposition and the second one follows.
\end{proof}

The next case we investigate is the case when $f$ actually grows like $ |y|^\alpha$, namely satisfies
\begin{itemize}
\item[(H1) ] $f \in W^{1,\infty}_{\text{loc}}(\mathbb{R}^m)$ and there exists $f_0 ,\alpha > 0$ such that, for all $y\in \R^m$
$$
f^{-1}_0 (|y|^\alpha+1) \leq f(y) \leq f_0 (|y|^\alpha+1)
$$
\end{itemize}
It is worth pointing out that this assumption may not seem to be as general as it could be, because of the lower bound where we could have subtracted a constant, but, since we are interested in solving the ergodic problem, this translation by a constant has no effect as it can be seen in Proposition~\ref{lambda-fp} (i).

The first result we have is the
\begin{proposition}\label{batinf}Assume that $f$ satisfies (H0)-(H1) and that $\theta >1$. If $\phi$ is a bounded from below solution of $(EP)$ then there exists $c>0$ such that $\phi(y) \geq c|y|^\gamma-c^{-1}$ where $\gamma=\frac{\alpha}{\theta}+1$.
\end{proposition}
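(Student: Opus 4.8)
The plan is to bound $\phi$ from below by a radial barrier growing like $c|y|^\gamma$. Since $\phi$ is bounded from below, say $\phi\ge -C_0$, and $|y|^\gamma$ is bounded on bounded sets, it suffices to produce $c>0$ and $C>0$ with $\phi(y)\ge c|y|^\gamma-C$ for $|y|$ large; shrinking $c$ afterwards absorbs $C$ and the values on a fixed ball into the term $c^{-1}$, giving the stated inequality on all of $\R^m$.

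\textbf{The barrier and the reduction.} For $c>0$ put $\chi_c(y):=c|y|^\gamma$. Since $\gamma=\alpha/\theta+1>1$ one has $\theta(\gamma-1)=\alpha$, hence $\tfrac1\theta|D\chi_c|^\theta=\tfrac{(c\gamma)^\theta}{\theta}|y|^\alpha$, while $\Delta\chi_c=c\gamma(\gamma+m-2)|y|^{\gamma-2}>0$ because $\gamma+m-2>0$. Using the lower bound $f(y)\ge f_0^{-1}(1+|y|^\alpha)$ from (H1),
\[
G[\chi_c](y)\le\Big(\tfrac{(c\gamma)^\theta}{\theta}-f_0^{-1}\Big)|y|^\alpha-f_0^{-1},
\]
so choosing $c\le \tfrac1\gamma(\theta/2f_0)^{1/\theta}$ and then $R_c$ large enough (depending only on $\theta,m,\gamma,f_0,\lambda$) gives $G[\chi_c]+\lambda\le-\tfrac14 f_0^{-1}|y|^\alpha<0$ for $|y|>R_c$, i.e. $\chi_c$ is a classical \emph{strict} subsolution of $(EP)_\lambda$ outside $B_{R_c}$. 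Next, $\phi-\chi_c$ has no interior local minimum in $\{|y|>R_c\}$: at such a point $y_0$ one has $D\phi(y_0)=D\chi_c(y_0)$ and $\Delta(\phi-\chi_c)(y_0)\ge0$, but subtracting the strict subsolution inequality for $\chi_c$ from the (super)solution inequality for $\phi$ at $y_0$, the gradient terms cancelling, gives $-\tfrac12\Delta(\phi-\chi_c)(y_0)\ge\tfrac14 f_0^{-1}|y_0|^\alpha>0$, a contradiction. By the minimum principle, for every $\rho>R_c$ the minimum of $\phi-\chi_c$ over $\{R_c\le|y|\le\rho\}$ is attained on $\{|y|=R_c\}$ or on $\{|y|=\rho\}$, and on the inner sphere $\phi-\chi_c\ge -C_0-cR_c^\gamma$. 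Hence everything reduces to ruling out that $\phi(y)-c|y|^\gamma\to-\infty$ along some sequence $|y_k|\to+\infty$; equivalently, to showing that a bounded from below solution is necessarily coercive, with growth at least $|y|^\gamma$.

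\textbf{Growth at infinity via a blow-down.} This is the delicate point, because the equation does not by itself force the radial component of $D\phi$ to be large. I would argue by rescaling: for $R\ge1$ set $\phi_R(x):=R^{-\gamma}\phi(Rx)$. By Proposition~\ref{bainf1} the family $\{\phi_R\}_{R\ge1}$ is bounded in $C^{0,1}$ on every compact set, $\phi_R\ge -C_0R^{-\gamma}$, and $\phi_R$ solves
\[
-\tfrac12 R^{\gamma-2-\alpha}\Delta\phi_R+\tfrac1\theta|D\phi_R|^\theta=R^{-\alpha}\big(f(Rx)-\lambda\big),
\]
where $\gamma-2-\alpha=\alpha(\tfrac1\theta-1)-1<0$, so the viscous coefficient vanishes, and by (H1) the right-hand side lies between $f_0^{-1}(|x|^\alpha+R^{-\alpha})-R^{-\alpha}\lambda$ and $f_0(|x|^\alpha+R^{-\alpha})+R^{-\alpha}|\lambda|$. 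By Ascoli-Arzela and the stability of viscosity solutions under vanishing viscosity, along a subsequence $\phi_{R_n}\to\phi_\infty$ locally uniformly, where $\phi_\infty\ge0$ is a viscosity supersolution of the coercive first-order equation $\tfrac1\theta|Du|^\theta=f_0^{-1}|x|^\alpha$ on $\R^m$. For such a supersolution, bounded from below, one gets $\phi_\infty(x)\ge c_1|x|^\gamma$ with $c_1:=\tfrac1\gamma(\theta f_0^{-1})^{1/\theta}$ depending only on $\theta,f_0,\gamma$: the supersolution inequality propagates the lower bound along the optical geodesics of the Hamiltonian, which can only accumulate at the single point where it fails to be coercive, the origin, giving $\phi_\infty(x)\ge\phi_\infty(0)+\tfrac1\gamma(\theta f_0^{-1})^{1/\theta}|x|^\gamma$ (equivalently, one compares $\phi_\infty$ from below with the strict subsolutions $a|x|^\gamma$, $a<c_1$, using the no--interior--maximum principle exactly as above). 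Since $c_1$ does not depend on the subsequence, if $\phi(z_j)<\tfrac{c_1}{2}|z_j|^\gamma$ held for some $|z_j|\to+\infty$, rescaling around $R=|z_j|$ and extracting a blow-down limit would produce $\phi_\infty(\omega)\le\tfrac{c_1}{2}<c_1=c_1|\omega|^\gamma$ with $\omega=\lim z_j/|z_j|$, a contradiction. Therefore $\phi(y)\ge\tfrac{c_1}{2}|y|^\gamma$ for $|y|$ large, which together with the first paragraph proves the proposition.

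I expect the blow-down step to be the main obstacle: the barrier argument shows cleanly that the only thing left to prove is that $\phi$ grows, but establishing this genuinely requires passing to the first-order limit equation — where the gradient is comparable to $|x|^{\gamma-1}$ and the characteristic structure is available — rather than any direct comparison on $\R^m$, which runs into the circular difficulty that $\phi$ is a priori only known to be bounded below and so $\chi_c-\phi$ cannot be controlled at infinity.
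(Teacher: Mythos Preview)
Your overall strategy---rescale, let the viscosity vanish, and compare with an eikonal solution---is exactly the paper's. But the paper centres the blow-down at the bad point rather than at the origin, and this difference matters.

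The paper argues by contradiction from a sequence $y_\epsilon$ with $\phi(y_\epsilon)/|y_\epsilon|^\gamma\to 0$, sets $\Gamma_\epsilon=|y_\epsilon|/2$ and $v_\epsilon(y)=\Gamma_\epsilon^{-\gamma}\phi(y_\epsilon+\Gamma_\epsilon y)$ on $B_1$. Because the ball $B(y_\epsilon,\Gamma_\epsilon)$ stays at distance $\Gamma_\epsilon$ from the origin, (H1) gives $f\ge f_0^{-1}\Gamma_\epsilon^\alpha$ there, so after rescaling the right-hand side is bounded below by a \emph{constant} $f_0^{-1}$. The limit $v$ is then a supersolution of $\tfrac1\theta|Dv|^\theta\ge f_0^{-1}$ on $B_1$ with $v\ge 0$ on $\partial B_1$, and a single comparison with the distance function $(\theta f_0^{-1})^{1/\theta}d(\cdot,\partial B_1)$ gives $v(0)\ge(\theta f_0^{-1})^{1/\theta}>0$, contradicting $v_\epsilon(0)=2^\gamma\phi(y_\epsilon)/|y_\epsilon|^\gamma\to 0$.

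Your version centres at the origin, so the limiting right-hand side is $f_0^{-1}|x|^\alpha$, which degenerates at $0$. The step ``$\phi_\infty(x)\ge c_1|x|^\gamma$ on $\R^m$'' is then not justified as written. Justification (a) is heuristic: the supersolution property of a first-order equation does not by itself propagate lower bounds outward along characteristics. Justification (b) has exactly the circularity you flag for $\phi$ itself: comparing $\phi_\infty$ with the strict subsolution $a|x|^\gamma$ via a no-interior-minimum argument fails because you have no control of $\phi_\infty-a|x|^\gamma$ at infinity, and the only information on $\phi_\infty$ is $\phi_\infty\ge 0$. (On a ball $B_\rho$ the comparison also breaks down: the would-be maximiser of $a(\rho^\gamma-|x|^\gamma)-\phi_\infty$ can sit at the origin, where the subsolution is not strict.) The argument can be repaired by comparing on an annulus $\{r\le|x|\le\rho\}$ where the coefficient is bounded below, or---equivalently---by comparing on the ball $B(x_0,|x_0|/2)$ where $g\ge f_0^{-1}(|x_0|/2)^\alpha$; but that is precisely the paper's centring trick transplanted to the limit equation. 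Your barrier paragraph is correct but redundant: once the blow-down gives $\phi(y)\ge c|y|^\gamma$ for large $|y|$, the conclusion follows directly.
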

\begin{proof} Adding constants to $\phi$ if necessary we may assume that $\phi \geq 0$. We already know that $\phi$ satisfies, for all $y\in \R^m$
\begin{equation*}
|D \phi(y)| \leq K(1+|y|^{\gamma-1}) \quad \text{(Proposition~\ref{bainf1})}  
 \end{equation*}
and
\begin{equation*}
|\phi(y)|\leq K(1+|y|^\gamma) \quad \text{(consequence of the previous estimate)} 
\end{equation*}
for some constant $K$.

We argue by contradiction assuming that there exists a sequence $|y_\epsilon| \to +\infty$ such that $\frac{\phi(y_\epsilon)}{|y_\epsilon|^\gamma} \to 0$. We set $\Gamma_\epsilon=\frac{|y_\epsilon|}{2}$ and we introduce
\begin{equation*}
v_\epsilon(y)=\frac{\phi(y_\epsilon+\Gamma_\epsilon y)}{{\Gamma_\epsilon}^\gamma} \text{ for } |y| \leq 1.
\end{equation*}
Because of the above estimates on $\phi$, we have $|v_\epsilon|$, $|Dv_\epsilon|$ uniformly bounded and $v_\epsilon$ satisfies
\begin{equation*}
-\frac{1}{2}\Gamma^{\gamma-2-\alpha}_\epsilon \Delta v_\epsilon +\frac{1}{\theta} |D v_\epsilon|^\theta = \Gamma^{-\alpha}_\epsilon \big( f(y_\epsilon+\Gamma_\epsilon y)-\lambda \big) \text{ in } B_1.
\end{equation*}
Then we notice 
\begin{equation*}
\gamma-2-\alpha=\frac{\alpha}{\theta}-1-\alpha=\alpha(\frac{1}{\theta}-1)-1 < 0
\end{equation*}
and therefore $\Gamma^{\gamma-2-\alpha}_\epsilon \to 0$ as $\epsilon \to 0$.
\begin{equation*}
f(y_\epsilon+\Gamma_\epsilon y) \geq f_0^{-1} (|\Gamma_\epsilon|^\alpha+1)\;\text{ for } |y| \leq 1,
\end{equation*}
since $|y_\epsilon+\Gamma_\epsilon y| \geq |y_\epsilon|-\Gamma_\epsilon \geq \Gamma_\epsilon$.

Since $(v_\epsilon)$ is precompact in $C(\bar{B}_1)$, we can apply Ascoli's Theorem and pass to the limit in the viscosity sense: if $v_\epsilon \to v$ then
\begin{equation*}
\frac{1}{\theta} |Dv|^\theta \geq f_0^{-1} \text{ in } B_1
\end{equation*}
and
\begin{equation*}
v \geq 0 \text{ on } \partial B_1 \text{ since $\phi \geq 0$}
\end{equation*}
therefore $v$ is a supersolution of the equation $\frac{1}{\theta} |Du|^\theta = f_0^{-1}$ with an homogeneous boundary condition for which the unique solution is $(\theta f_0^{-1})^{1/\theta} d(y, \partial B(0,1))$. By comparison principle for the eikonal equation the supersolution is above the solution. Then,  $v(y) \geq (\theta f_0^{-1})^\frac{1}{\theta} d(y,\partial B_1)$ and $v(0) \geq (\theta f_0^{-1})^{\frac{1}{\theta}}$.

But this is a contradiction since $v_\epsilon (0)=2^\gamma \frac{\phi(y_\epsilon)}{|y_\epsilon|^\gamma} \to 0$ by our hypothesis.
\end{proof}

Gathering the results of Proposition~\ref{batinf} and Theorem~\ref{exist-bfb}, we have the
\begin{corollary}\label{exist-gamma}Assume that $f$ satisfies (H0)-(H1) and $\theta>1$.
There exists a solution $(\lambda,\phi)$ of $(EP)$ such that $\phi$ belongs to $
\Phi_\gamma$, the set of functions $v \in C^2 (\mathbb{R}^m)$ for which there exists $c>1$ such that, for all $y \in \R^m$
$$c^{-1} |y|^\gamma -c \leq v(y)\leq c (1+|y|^\gamma) \; .
$$
\end{corollary}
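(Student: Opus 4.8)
The plan is simply to assemble the three ingredients already available. First I would check that assumption (H1) makes $f$ coercive: since $f(y)\geq f_0^{-1}(|y|^\alpha+1)$ with $\alpha>0$, one has $f(y)\to+\infty$ as $|y|\to+\infty$. Hence Theorem~\ref{exist-bfb} applies and produces a solution $(\lambda,\phi)$ of $(EP)$ with $\phi$ bounded from below.

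Next, since $(\lambda,\phi)$ is a bounded from below solution and $f$ satisfies (H0)-(H1), Proposition~\ref{batinf} yields a constant $c_1>0$ such that $\phi(y)\geq c_1|y|^\gamma-c_1^{-1}$ for all $y\in\R^m$, with $\gamma=\frac{\alpha}{\theta}+1$; this is the lower bound in the definition of $\Phi_\gamma$. On the other hand, Proposition~\ref{bainf1} (which only uses (H0), a part of (H0)-(H1)) applied to the same solution gives a constant $K>0$ with $|\phi(y)|\leq K(1+|y|^\gamma)$, hence in particular the upper bound $\phi(y)\leq K(1+|y|^\gamma)$.

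Finally I would take $c:=\max\{c_1^{-1},K\}+1>1$. Then $c^{-1}\leq c_1$ and $-c\leq -c_1^{-1}$, so $c^{-1}|y|^\gamma-c\leq c_1|y|^\gamma-c_1^{-1}\leq\phi(y)$, while $\phi(y)\leq K(1+|y|^\gamma)\leq c(1+|y|^\gamma)$; thus $\phi\in\Phi_\gamma$ and the corollary follows.

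There is essentially no obstacle in this argument: the only points needing a word of care are verifying that (H1) implies the coercivity required to invoke Theorem~\ref{exist-bfb}, and reconciling the various constants ($c_1$ from Proposition~\ref{batinf} and $K$ from Proposition~\ref{bainf1}) into a single constant $c>1$ realizing the two-sided bound in the definition of $\Phi_\gamma$.
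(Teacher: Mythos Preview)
Your proposal is correct and follows exactly the approach the paper indicates: it simply gathers Theorem~\ref{exist-bfb} (existence of a bounded from below solution, applicable since (H1) forces coercivity of $f$) with Proposition~\ref{batinf} (lower bound) and Proposition~\ref{bainf1} (upper bound). Your explicit reconciliation of the constants into a single $c>1$ is a careful elaboration of what the paper leaves implicit.
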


We point out that the results of Proposition~\ref{batinf} and Corollary~\ref{exist-gamma} are valid for any $\theta >1$ and not only in the subquadratic case, even if we use them only in the subquadratic case.

\subsection{Uniqueness} The proof is very different in the subquadratic case: while we are using the transformation $\phi \to -e^{-\phi}$ in the superquadratic case {\em for both sub and supersolutions}, we are going to use here the transformation $\phi \to \phi^q$ where $q>1$ is very close to 1, {\em but only for the supersolution}. If we assume both the sub and supersolution to be in $\Phi_\gamma$, this ensures that the supersolution grows faster at infinity, solving the problem at infinity.

The key result is the
\begin{lemma}\label{lem-subq}
Let $(\lambda, \phi)$ be a supersolution of $(EP)$ such that $\phi \in \Phi_\gamma$ and $\phi \geq 1$. Then, there exists $R>1$ and $q_0 > 1$ such that for all $q \in (1, q_0)$, $(\lambda,\phi^q)$ is a strict supersolution of $(EP)$ in $B^c_R $. 
\end{lemma}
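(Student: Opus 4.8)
The plan is to insert $u=\phi^q$ into the operator $G$, exploit the (classical, pointwise) supersolution inequality for $\phi$ — legitimate since $\phi\in C^2(\R^m)$ — and then control the resulting terms by means of the two‑sided growth bound $c^{-1}|y|^\gamma-c\le\phi\le c(1+|y|^\gamma)$ coming from $\phi\in\Phi_\gamma$, the gradient estimate $|D\phi(y)|\le K(1+|y|^{\gamma-1})$ of Proposition~\ref{bainf1}, and the polynomial bounds (H1) on $f$. We use the hypothesis $\phi\ge1$ (no real restriction, since adding a constant to $\phi$ changes neither the equation $(EP)$ nor membership in $\Phi_\gamma$).

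First I would carry out the differentiation: with $u=\phi^q$ one has $Du=q\phi^{q-1}D\phi$, $\Delta u=q\phi^{q-1}\Delta\phi+q(q-1)\phi^{q-2}|D\phi|^2$ and $|Du|^\theta=q^\theta\phi^{(q-1)\theta}|D\phi|^\theta$. Substituting these and then bounding $-\tfrac{q}{2}\phi^{q-1}\Delta\phi$ from below via the supersolution inequality $-\tfrac12\Delta\phi\ge(f-\lambda)-\tfrac1\theta|D\phi|^\theta$ multiplied by $q\phi^{q-1}>0$, one obtains
$$G[\phi^q]\ \ge\ (q\phi^{q-1}-1)(f-\lambda)+\frac{q}{\theta}|D\phi|^\theta\bigl(q^{\theta-1}\phi^{(q-1)\theta}-\phi^{q-1}\bigr)-\frac{q(q-1)}{2}\phi^{q-2}|D\phi|^2.$$
Since $\phi\ge1$, $q\ge1$ and $\theta>1$ we have $q^{\theta-1}\phi^{(q-1)\theta}\ge\phi^{q-1}$, so the middle term is nonnegative and may simply be discarded, leaving $G[\phi^q]\ge(q\phi^{q-1}-1)(f-\lambda)-\tfrac{q(q-1)}{2}\phi^{q-2}|D\phi|^2$.

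It remains to show that for $|y|$ large the first term dominates. From $e^t\ge1+t$ one gets $q\phi^{q-1}-1\ge\phi^{q-1}-1\ge(q-1)\ln\phi\ge q-1$ once $\phi\ge e$, which holds for $|y|$ beyond a fixed radius; combined with $f-\lambda\ge\tfrac12 f_0^{-1}|y|^\alpha$ from (H1) this makes the first term at least $\tfrac{q-1}{2f_0}|y|^\alpha$ for $|y|$ large. For the second term, the lower bound $\phi\ge c^{-1}|y|^\gamma-c\ge\tfrac12 c^{-1}|y|^\gamma$ (valid for $|y|$ large) together with $q<2$ gives $\phi^{q-2}\le(2c)^{2-q}|y|^{\gamma(q-2)}$, while $|D\phi|^2\le4K^2|y|^{2\gamma-2}$ for $|y|\ge1$; hence $\phi^{q-2}|D\phi|^2\le C_0|y|^{\gamma q-2}$ and the second term is at most $C_0(q-1)|y|^{\gamma q-2}$, with $C_0$ depending only on $c,K,\theta,\alpha$ and an upper bound for $q$. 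The decisive point is arithmetic: since $\gamma=\tfrac{\alpha}{\theta}+1$ one has $\gamma-2=\tfrac{\alpha}{\theta}-1<\alpha$, hence $\gamma q-2<\alpha$ for $q$ close enough to $1$. I would therefore first fix $q_0\in(1,2)$ with $\gamma q_0-2<\alpha$, set $\beta:=\max(0,\gamma q_0-2)<\alpha$, note $|y|^{\gamma q-2}\le2|y|^\beta$ for all $q\in(1,q_0)$ and $|y|\ge1$, and then choose $R$ large (depending only on $f_0,C_0,\alpha,\beta$ and the radii above) so that
$$G[\phi^q](y)\ \ge\ \frac{q-1}{2f_0}|y|^\alpha-2C_0(q-1)|y|^\beta\ \ge\ \frac{q-1}{4f_0}|y|^\alpha\ >\ 0\qquad\text{for }|y|\ge R,$$
which is exactly the assertion that $(\lambda,\phi^q)$ is a strict supersolution of $(EP)$ in $B^c_R$ (choosing $q_0$ before $R$ is consistent with the order in the statement).

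I expect the only genuinely delicate step to be this exponent bookkeeping: everything hinges on the fact that the harmful term $\phi^{q-2}|D\phi|^2$ grows like $|y|^{\gamma q-2}$ with exponent \emph{strictly} below $\alpha$ when $q$ is near $1$, and it is essential here to bound $\phi^{q-2}$ from above using the \emph{lower} bound on $\phi$ (because $q-2<0$), not the crude $\phi^{q-2}\le\phi^{q-1}$, which would fail in the subquadratic range. The differentiation of $\phi^q$, the discarding of the gradient term, and the final comparison of powers are all routine. One caveat worth recording is that the gradient bound invoked above is the one from Proposition~\ref{bainf1}, stated for genuine solutions — consistent with the way this lemma is to be applied.
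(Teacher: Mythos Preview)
Your proof is correct and follows essentially the same route as the paper's: compute $G[\phi^q]$, use the pointwise supersolution inequality for $\phi$, discard the nonnegative gradient term $\tfrac{q}{\theta}|D\phi|^\theta(q^{\theta-1}\phi^{(q-1)\theta}-\phi^{q-1})$, and then compare growth rates to see that the positive contribution dominates the $-\tfrac{q(q-1)}{2}\phi^{q-2}|D\phi|^2$ term.

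The one noteworthy difference is in how you handle the factor $(q\phi^{q-1}-1)(f-\lambda)$. You bound $q\phi^{q-1}-1\ge q-1$ (for $\phi\ge e$), which throws away the growth of $\phi^{q-1}$ and leads to the comparison $\gamma q-2<\alpha$; this forces you to fix $q_0$ first and then $R$. The paper instead writes $q\phi^{q-1}-1=(\phi^{q-1}-1)+(q-1)\phi^{q-1}$, discards the first piece (nonnegative where $f>\lambda$), and keeps the factor $\phi^{q-1}$ in the positive term; the comparison then reduces to $\tfrac{q}{2}\,\phi^{-1}|D\phi|^2<f-\lambda$, i.e.\ the $q$-independent exponent inequality $\gamma-2<\alpha$, so $R$ can be chosen uniformly in $q$ (the paper explicitly remarks on this, as it is what allows letting $q\to1$ afterwards). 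Your ordering still delivers exactly what the lemma states, and your caveat about the gradient bound from Proposition~\ref{bainf1} being for solutions rather than general supersolutions is well observed---the paper uses it the same way.
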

\begin{proof}
We wish to prove that, there exists $R>1$ and $q_0 > 1$ such that for all $q \in (1, q_0)$ 
\begin{equation*}
Q(y)>0 \quad \text{for all } y \in B^c_R 
\end{equation*}
where $Q(y):=-\frac{1}{2} \Delta \phi^q (y) + \frac{1}{\theta} |D \phi^q (y)|^\theta - (f(y)-\lambda)$.
We have,
\begin{equation*}
D \phi^q = q \phi^{q-1} D \phi
\end{equation*}
and
\begin{equation*}
\Delta \phi^q = q (q-1) \phi^{q-2} |D \phi|^2+q\phi^{q-1} \Delta \phi.
\end{equation*}
Then $Q$ becomes
\begin{equation*}
Q= -\frac{1}{2} \big(  q (q-1) \phi^{q-2} |D \phi|^2+q\phi^{q-1} \Delta \phi \big) + \frac{1}{\theta} \big| q \phi^{q-1} D \phi \big|^{\theta}  -( f-\lambda). 
\end{equation*}
By adding and subtracting $\frac{1}{\theta} q \phi^{q-1} |D \phi|^\theta$, using that $\phi$ is a supersolution of $(EP)$ and noticing that $(1-q\phi^{q-1})=[(1-\phi^{q-1})-(q-1)\phi^{q-1}]$, we arrive at
\begin{align*}
Q 
 &\geq-\frac{1}{2} q (q-1) \phi^{q-2} |D \phi|^2 +\frac{1}{\theta} \big( q^\theta \phi^{\theta(q-1)} - q\phi^{q-1} \big) |D \phi|^\theta  \\ & \quad - [(1-\phi^{q-1})-(q-1)\phi^{q-1}] (f-\lambda)\\
 &\geq-\frac{1}{2} q (q-1) \phi^{q-2} |D \phi|^2 +\frac{1}{\theta} \big( q^\theta \phi^{\theta(q-1)} - q\phi^{q-1} \big) |D \phi|^\theta - (1-\phi^{q-1})(f-\lambda)  \\ 
&\quad + (q-1)\phi^{q-1} (f-\lambda).
\end{align*}
But
\begin{equation*}
\frac{1}{\theta} \big( q^\theta \phi^{\theta(q-1)} - q\phi^{q-1} \big) |D \phi|^\theta \geq 0 \text{ and }  - (1-\phi^{q-1})(f-\lambda) \geq 0
\end{equation*}
because $q>1$, $\phi \geq 1$ and for $R$ large enough we have $f-\lambda>0$ in $B^c_R$. Therefore, if we prove that there exist large $R>1$ such that 
\begin{align*}
Q_1>0 \text{ for all } y \in B^c_R 
\end{align*}
where $Q_1:= -\frac{1}{2} q (q-1) \phi^{q-2} |D \phi|^2 + (q-1)\phi^{q-1} (f-\lambda)$, we would have $Q>0 \text{ for all } y \in B^c_R $.

We see that, since $q>1$ and $\phi \geq 1$, $Q_1>0$ is equivalent to
$$\frac{1}{2} q \frac{|D \phi|^2}{\phi}  < f-\lambda\; .$$
By Proposition~\ref{bainf1}, Assumption $(H1)$ and the fact that $\phi \in \Phi_\gamma$ we can see that there are constants $K,M>0$ such that
\begin{align*}
\frac{1}{2} q \frac{|D \phi(y)|^2}{\phi(y)} \leq K |y|^{\gamma-2}.
\end{align*}
and 
\begin{equation*}
 f(y)-\lambda \geq M |y|^\alpha 
\end{equation*}
for all $y$ in the complementary of a (possible large) ball $B_R$. Therefore, to have inequality ${Q}_1>0$ (at least for $R$ large) it is enough to have $\gamma-2 <\alpha$. But
\begin{equation*}
\alpha-(\gamma-2) =\alpha-\frac{\alpha}{\theta}+1=\frac{\alpha}{\theta^*}+1>0.
\end{equation*}
Consequently, there exist a $R>1$ such that $Q_1>0$ for all $y \in B^c_R$. It is worth remarking that such $R$ is independent of $q \in (1, q_0)$.
\end{proof}
\begin{proposition}\label{comp-subq}
Suppose that $\phi$ and $\psi$ are respectively a subsolution and a supersolution of $(EP)$ such that $\phi, \psi \in \Phi_\gamma$, $\psi \geq 1$ and $\phi \leq \psi$ on $\partial B_R $. If $R>1$ is as in Lemma~\ref{lem-subq}, then $\phi \leq \psi^{q}$ in $B^c_R$.
\end{proposition}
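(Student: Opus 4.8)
The plan is to compare $\phi$ directly with the modified supersolution $\psi^q$ on the unbounded set $B^c_R$, using two facts: by Lemma~\ref{lem-subq}, $\psi^q$ is a \emph{strict} supersolution of $(EP)$ in $B^c_R$ for every $q\in(1,q_0)$; and raising $\psi$ to a power strictly larger than $1$ makes it grow strictly faster than $\phi$ at infinity, so the maximum principle can be closed in spite of the unboundedness of the domain. Fix an arbitrary $q\in(1,q_0)$.

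First I would record the growth information. Since $\phi,\psi\in\Phi_\gamma$, there is $c>1$ with $\phi(y)\le c(1+|y|^\gamma)$ and $\psi(y)\ge c^{-1}|y|^\gamma-c$ for all $y$; combined with $\psi\ge 1$ this gives, for $|y|$ large, $\psi^q(y)\ge (c^{-1}|y|^\gamma-c)^q$, which grows like $|y|^{q\gamma}$. As $q\gamma>\gamma$, we get $\phi(y)-\psi^q(y)\to-\infty$ as $|y|\to+\infty$. Hence $M:=\sup_{B^c_R}(\phi-\psi^q)$ is finite, and if $M\ge 0$ it is attained at some $\bar y\in\bar B^c_R$. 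On $\partial B_R$ we have $\psi\ge 1$ and $q>1$, so $\psi^q\ge\psi$ there, and since $\phi\le\psi$ on $\partial B_R$ by hypothesis, $\phi-\psi^q\le 0$ on $\partial B_R$.

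Now I argue by contradiction, assuming $M>0$. By the previous step $\bar y\notin\partial B_R$, so $\bar y$ is an interior maximum point of $\phi-\psi^q$ in the open set $B^c_R$. Since $\psi\in C^2$ and $\psi\ge 1>0$, the function $\psi^q$ is $C^2$, hence at $\bar y$ we have $D\phi(\bar y)=D\psi^q(\bar y)$ and $\Delta\phi(\bar y)\le\Delta\psi^q(\bar y)$. Writing the subsolution inequality for $\phi$ at $\bar y$, then using these two relations together with the strict supersolution inequality for $\psi^q$ provided by Lemma~\ref{lem-subq}, we obtain
$$0\ \ge\ -\tfrac12\Delta\phi(\bar y)+\tfrac1\theta|D\phi(\bar y)|^\theta-(f(\bar y)-\lambda)\ \ge\ -\tfrac12\Delta\psi^q(\bar y)+\tfrac1\theta|D\psi^q(\bar y)|^\theta-(f(\bar y)-\lambda)\ >\ 0,$$
a contradiction. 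Therefore $M\le 0$, i.e. $\phi\le\psi^q$ in $B^c_R$.

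The only genuine difficulty is the treatment of infinity: without the exponent $q>1$ the maximum of $\phi-\psi$ over $B^c_R$ need not be attained, and this is precisely the role played by the two-sided bounds in the definition of $\Phi_\gamma$ together with the inequality $q\gamma>\gamma$; everything else is a routine application of the maximum principle. If one does not wish to assume that $\phi$ and $\psi$ are classical sub/supersolutions, the same conclusion follows by the standard doubling-of-variables argument for viscosity solutions — and here the penalisation term $\delta|y|^2$ used in Proposition~\ref{comp-ext} is not even needed, since $\phi-\psi^q\to-\infty$ at infinity — but in the $C^2$ framework used throughout the paper the direct argument above is enough.
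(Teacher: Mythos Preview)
Your proof is correct and follows essentially the same route as the paper's: you use the growth gap $|y|^{q\gamma}$ vs.\ $|y|^\gamma$ from $\Phi_\gamma$ to force $\phi-\psi^q\to-\infty$ and obtain an attained maximum, rule out the boundary via $\psi^q\ge\psi\ge\phi$ on $\partial B_R$, and then derive a contradiction from the first- and second-order conditions at an interior maximum against the strict supersolution property of $\psi^q$. The only cosmetic difference is your additional remark on the viscosity-solutions variant, which the paper does not include.
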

\begin{proof}
By Lemma~\ref{lem-subq}, we know that $(\lambda, \psi^q)$ is a strict supersolution of $(EP)$ in $B^c_R$. We wish to prove that $\phi \leq \psi^{q}$ in $B^c_R$.

We first notice that, since $\phi, \psi \in \Phi_\gamma$, $\phi$ grows like $|y|^\gamma$ at infinity while $\psi^{q} $ grows like $|y|^{q\gamma}$. Since $q>1$, we can conclude
\begin{equation*}
(\phi-\psi^q)(y) \to -\infty \text{ as } |y| \to \infty.
\end{equation*}
Hence there exists a maximum point $y^* \in B^c_R$ of $\phi-\psi^q$: if $(\phi-\psi^q)(y^*) \leq 0$, we are done. Therefore we can assume that $(\phi-\psi^q)(y^*) > 0$.

If $y^* \in \partial B_R$, $\phi(y^*) \leq \psi (y^*) \leq \psi^q(y^*)$ because $\psi \geq 1$ and then we would have that $\phi(y^*)-\psi^q(y^*) \leq 0$ a contradiction. Therefore $|y^*|>R$ and we have
$$
D \phi = D \psi^q
\quad\hbox{and}\quad
\Delta (\phi - \psi^q) \leq 0.
$$
We then arrive at
\begin{align*}
f(y^*)-\lambda \geq -\frac{1}{2} \Delta \phi (y^*)+\frac{1}{\theta} |D \phi(y^*)|^\theta \geq & -\frac{1}{2} \Delta \psi^q (y^*)+\frac{1}{\theta} |D \psi^q(y^*)|^\theta\\
 >& f(y^*)-\lambda \text{ in } B^c_R 
\end{align*}
a contradiction. Therefore, $\phi \leq \psi^q$ in $B^c_R$.
\end{proof}
\begin{corollary}
Suppose that $\phi$ and $\psi$ are, respectively, a subsolution and a supersolution of $(EP)$ such that $\phi, \psi \in \Phi_\gamma$, $\psi \geq 1$ and $\phi \leq \psi$ on $\partial B_R $. If $R>1$ is as in Lemma~\ref{lem-subq}, then $\phi \leq \psi$ in $B^c_R$.
\end{corollary}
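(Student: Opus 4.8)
The plan is to deduce this corollary from Proposition~\ref{comp-subq} by a simple limiting argument in the exponent $q$. First I would recall that the hypotheses of the corollary — $\phi,\psi\in\Phi_\gamma$, $\psi\ge 1$, $\phi\le\psi$ on $\partial B_R$, and $R>1$ chosen as in Lemma~\ref{lem-subq} — are exactly those of Proposition~\ref{comp-subq}. Moreover, the crucial point emphasized at the end of the proof of Lemma~\ref{lem-subq} is that the radius $R$ can be taken \emph{independent of} $q\in(1,q_0)$; hence Proposition~\ref{comp-subq} applies with this one fixed $R$ simultaneously for every $q\in(1,q_0)$, yielding
$$
\phi(y)\le \psi^{q}(y)\qquad\text{for all } y\in B_R^c \text{ and all } q\in(1,q_0).
$$

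Next I would fix an arbitrary point $y\in B_R^c$ and let $q\to 1^+$. Since $\psi\ge 1$, we can write $\psi^{q}(y)=e^{\,q\log\psi(y)}$ with $\log\psi(y)\ge 0$ finite, so $\psi^{q}(y)\to e^{\log\psi(y)}=\psi(y)$ as $q\to 1^+$. Passing to the limit in the inequality above gives $\phi(y)\le\psi(y)$, and since $y\in B_R^c$ was arbitrary, this proves $\phi\le\psi$ in $B_R^c$, as claimed.

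There is essentially no serious obstacle here: the monotonicity/coercivity work and the treatment of infinity were already absorbed into Lemma~\ref{lem-subq} and Proposition~\ref{comp-subq}, and what remains is only the continuity of $q\mapsto\psi^{q}(y)$ at $q=1$, which is elementary because $\psi\ge 1$ guarantees the exponentials are well behaved and the convergence is pointwise (no uniformity in $y$ is needed, since the conclusion is itself pointwise). The only thing one must not overlook is the uniformity of $R$ in $q$, which is precisely the remark recorded at the end of the proof of Lemma~\ref{lem-subq}; without it one could not use a single inequality for all $q$ and then pass to the limit.
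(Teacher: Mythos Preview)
Your argument is correct and follows exactly the paper's own approach: apply Proposition~\ref{comp-subq} for each $q\in(1,q_0)$ with the fixed $R$ (using that $R$ is independent of $q$), then let $q\to 1^+$ pointwise. The paper's proof is the one-line version of what you wrote.
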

\begin{proof}
Since $R$ in Lemma~\ref{lem-subq} is independent of $q$, the conclusion follows by letting $q \to 1$ in Proposition~\ref{comp-subq}.
\end{proof}
\begin{theorem}
Let $\theta <2$ and suppose that $(\lambda_1,\phi)$ and $(\lambda_2,\psi)$ are two solutions of $(EP)$ such that $\phi, \psi \in \Phi_\gamma$. Then, $\phi=\psi+C$ and $\lambda_1=\lambda_2$.
\end{theorem}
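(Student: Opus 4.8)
The plan is to follow the same strategy as in the superquadratic case (Theorem~\ref{uni-sup}), with the r\^ole of the exterior comparison Proposition~\ref{comp-ext} now played by Lemma~\ref{lem-subq}, Proposition~\ref{comp-subq} and the Corollary above. First I would reduce to a single ergodic constant: assuming without loss of generality that $\lambda_1 \geq \lambda_2$ (otherwise exchange $\phi$ and $\psi$), the function $\phi$ becomes a subsolution of $(EP)_{\lambda_2}$ since $f-\lambda_1 \leq f-\lambda_2$, while $\psi$ is a supersolution of $(EP)_{\lambda_2}$. Since $\phi,\psi \in \Phi_\gamma$ are in particular bounded from below, I add a constant to $\psi$ so that $\psi \geq 1$ (this affects neither the supersolution property nor membership in $\Phi_\gamma$), and then fix $R>1$ as given by Lemma~\ref{lem-subq} for this $\psi$. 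Finally I add a constant to $\phi$ so that $\max_{\partial B_R}(\phi-\psi)=0$, which keeps $\psi \geq 1$ and $\phi$ a subsolution of $(EP)_{\lambda_2}$.

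With these normalisations $\phi \leq \psi$ on $\partial B_R$, so the Corollary following Proposition~\ref{comp-subq} (applied with $\lambda=\lambda_2$) gives $\phi \leq \psi$ in $B^c_R$; hence $\phi-\psi \leq 0$ on $B^c_R$ and $\max_{\partial B_R}(\phi-\psi)=0$. Inside $B_R$ I would argue by the Strong Maximum Principle: subtracting the (classical, since $\phi,\psi\in C^2$) inequalities $-\tfrac12\Delta\phi+\tfrac1\theta|D\phi|^\theta \leq f-\lambda_2$ and $-\tfrac12\Delta\psi+\tfrac1\theta|D\psi|^\theta \geq f-\lambda_2$, and writing $|D\phi|^\theta-|D\psi|^\theta = b(y)\cdot D(\phi-\psi)$ for a locally bounded vector field $b$ (obtained by integrating $p\mapsto \theta|p|^{\theta-2}p$ along the segment joining $D\psi(y)$ to $D\phi(y)$, which is continuous and locally bounded for $\theta>1$), one gets $-\tfrac12\Delta(\phi-\psi)+b\cdot D(\phi-\psi) \leq 0$ on all of $\R^m$. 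Since this operator has no zeroth-order term, $\phi-\psi$ cannot attain an interior maximum in $B_R$ unless it is constant there, and therefore $\max_{\bar B_R}(\phi-\psi) = \max_{\partial B_R}(\phi-\psi) = 0$.

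Combining the two parts, $\sup_{\R^m}(\phi-\psi)=0$ and this supremum is attained at a point of $\partial B_R$, which is interior to $\R^m$; applying the Strong Maximum Principle once more on the connected set $\R^m$ yields $\phi-\psi \equiv 0$, i.e. $\phi=\psi$. Plugging $\phi=\psi$ back into the two ergodic equations gives $\lambda_1=\lambda_2$, and undoing the constant shifts gives $\phi=\psi+C$ for some $C\in\R$. The analytic difficulty having been absorbed into Lemma~\ref{lem-subq} and Proposition~\ref{comp-subq}, the only genuinely delicate points are the bookkeeping of the additive constants — so that $R$ from Lemma~\ref{lem-subq} is fixed once and for all for the shifted $\psi$, and the subsequent shift of $\phi$ does not destroy $\psi\geq 1$ — and checking that the global maximum of $\phi-\psi$ over $\R^m$ is genuinely attained (at $\partial B_R$), so that the Strong Maximum Principle can be invoked globally rather than only on $B_R$.
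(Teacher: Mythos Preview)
Your proof is correct and follows essentially the same approach as the paper's own proof: reduce to a common $\lambda$, normalise $\psi\geq 1$ and $\max_{\partial B_R}(\phi-\psi)=0$, apply the exterior comparison (the Corollary to Proposition~\ref{comp-subq}) in $B_R^c$, use the Strong Maximum Principle inside $\bar B_R$, and then globally. You are in fact more careful than the paper about the order of operations (shifting $\psi$ first, then fixing $R$, then shifting $\phi$) and more explicit about the linearisation underlying the Strong Maximum Principle step; both are welcome clarifications but do not constitute a different route.
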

\begin{proof}
Suppose that $\lambda_1 \geq \lambda_2$. Otherwise we exchange the roles of $\phi$ and $\psi$ in the argument. Notice that if $\lambda_1 \geq \lambda_2$, then $(\lambda_1,\phi)$ is a subsolution of $(EP)$ with $\lambda=\lambda_2$.

We saw in Step 1 of Theorem 3.1 that for a fixed $R>1$ we can always add constants to $\phi$ and $\psi$ and ask that $\max_{\partial B_R} (\phi-\psi) =0$ and $\psi \geq 1$. We now look at $R>1$ given by Lemma~\ref{lem-subq} for which we know that $(\lambda_2,(\psi)^q)$ for $q>1$ is a strict supersolution of $(EP)$ with $\lambda=\lambda_2$. Corollary 3.8 give us now $\phi \leq \psi$ in $B^c_R$. From another point of view, $\max_{\partial B_R} (\phi-\psi) =0$ implies a comparison in the ball, $\phi \leq \psi$ in $\bar{B}_R$. Therefore, $\phi \leq \psi$ in $\mathbb{R}^m$ and we can repeat the argument of Theorem~\ref{uni-sup} to conclude that $\phi-\psi=C$ is a constant in $\mathbb{R}^m$. Consequently, from the equation of $(EP)$, $\lambda_1=\lambda_2$.
\end{proof}
Since solutions of $(EP)$ that are bounded from below belong to $\Phi_\gamma$ (Proposition 3.4), we have
\begin{theorem} (Uniqueness result)
Let $\theta <2$ and suppose that $(\lambda_1,\phi)$ and $(\lambda_2,\psi)$ are two solutions of $(EP)$ bounded from below. Then, $\phi=\psi+C$ and $\lambda_1=\lambda_2$.
\end{theorem}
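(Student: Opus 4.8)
The plan is to deduce this statement from the preceding theorem (uniqueness for solutions of $(EP)$ lying in $\Phi_\gamma$, valid for $\theta<2$) by checking that, under the standing assumptions (H0)--(H1), any \emph{bounded from below} solution of $(EP)$ automatically belongs to the class $\Phi_\gamma$. Once $\phi,\psi\in\Phi_\gamma$ is established, there is nothing left to prove: the $\Phi_\gamma$-uniqueness theorem applies to $(\lambda_1,\phi)$ and $(\lambda_2,\psi)$ and yields $\phi=\psi+C$ together with $\lambda_1=\lambda_2$.

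First I would invoke Proposition~\ref{bainf1}: since $f$ satisfies (H0), every solution $(\lambda,\phi)$ of $(EP)$ obeys the a priori bounds $|D\phi(y)|\leq K(1+|y|^{\gamma-1})$ and $|\phi(y)|\leq K(1+|y|^\gamma)$ with $\gamma=\frac{\alpha}{\theta}+1$. Adding a constant we may assume $\phi\geq 1$, and the second bound provides the upper inequality $\phi(y)\leq c(1+|y|^\gamma)$ required in the definition of $\Phi_\gamma$. For the matching lower bound I would then use Proposition~\ref{batinf}: under (H0)--(H1), a bounded from below solution $\phi$ of $(EP)$ satisfies $\phi(y)\geq c|y|^\gamma-c^{-1}$ for some $c>0$. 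Combining this with the upper bound above, and enlarging $c$ if necessary so that $c>1$, gives $c^{-1}|y|^\gamma-c\leq\phi(y)\leq c(1+|y|^\gamma)$, i.e. $\phi\in\Phi_\gamma$; the identical argument shows $\psi\in\Phi_\gamma$.

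Finally, I would simply quote the previous theorem applied to the pair of solutions $(\lambda_1,\phi)$, $(\lambda_2,\psi)\in\Phi_\gamma$ to obtain the conclusion.

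I do not expect any genuine obstacle at this stage, since all the substance is already contained in Propositions~\ref{bainf1} and~\ref{batinf} and in the $\Phi_\gamma$-uniqueness theorem. If one insists on identifying the delicate point, it is the lower growth estimate of Proposition~\ref{batinf}, whose proof relies on a blow-up rescaling near a hypothetical sequence where $\phi(y_\epsilon)/|y_\epsilon|^\gamma\to0$ and on a comparison with the eikonal equation exploiting the coercive lower bound on $f$ coming from (H1); but in the present proof this is used purely as a black box.
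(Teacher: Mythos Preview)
Your proposal is correct and follows exactly the paper's approach: the paper simply observes that bounded from below solutions of $(EP)$ belong to $\Phi_\gamma$ by Proposition~\ref{batinf} (together with the upper bound from Proposition~\ref{bainf1}), and then invokes the preceding $\Phi_\gamma$-uniqueness theorem. You have spelled out in more detail precisely what the paper condenses into a single sentence.
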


\section{Consequence of the uniqueness results : properties of $\lambda^*$}\label{l-star}

Theorem~\ref{exist-CV} shows the existence of a critical value
\begin{equation*} 
\lambda^* := \sup \{\lambda \in \mathbb{R} | \text{ $(EP)$ has a subsolution} \}
\end{equation*}
such that $(EP)$ admits a classical subsolution $\phi \in C^2 (\mathbb{R}^m)$ if and only if $\lambda \leq \lambda^*$. The following result gives a caracterisation of $\lambda^*$.

\begin{proposition}\label{uni-l-star} Under either the assumptions of Proposition~\ref{comp-ext} or \ref{comp-subq}, if $(\lambda,\phi)$ is solution of $(EP)$with $\phi$ bounded from below. Then $\lambda=\lambda^*$.
\end{proposition}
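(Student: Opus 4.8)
The plan is the following. Since $\phi$ is in particular a $C^2$-subsolution of $(EP)_\lambda$, the very definition $\lambda^* = \sup\{\lambda\in\R \mid (EP)_\lambda \text{ has a subsolution}\}$ gives at once $\lambda \le \lambda^*$. The whole content of the proposition is therefore the reverse inequality $\lambda \ge \lambda^*$, and the strategy I would follow is to exhibit \emph{some} bounded from below solution of $(EP)$ whose ergodic constant is exactly $\lambda^*$, and then to invoke the uniqueness results to transfer this value to the given solution $(\lambda,\phi)$.

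To produce such a solution I would apply Theorem~\ref{exist-bfb}. Observe that $f$ is coercive in both frameworks: this is part of the hypotheses of Proposition~\ref{comp-ext} in the superquadratic case, while in the subquadratic case it follows from the lower bound in (H1), which is in force under the assumptions of Proposition~\ref{comp-subq}. Hence Theorem~\ref{exist-bfb} yields a solution $(\bar\lambda,\bar\phi)$ of $(EP)$ with $\bar\phi$ bounded from below. Reading its proof, $\bar\lambda = \lim_{R\to\infty}\lambda_R$, where the $\lambda_R$ are nonincreasing in $R$ and satisfy $\lambda_R \ge \lambda^*(f)$ for every $R\ge 1$; therefore $\bar\lambda \ge \lambda^*$. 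On the other hand $\bar\phi$ is itself a $C^2$-subsolution of $(EP)_{\bar\lambda}$, so $\bar\lambda \le \lambda^*$ by the definition of $\lambda^*$. Combining the two bounds gives $\bar\lambda = \lambda^*$.

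It then remains only to compare the given $(\lambda,\phi)$ with $(\lambda^*,\bar\phi)$: both are solutions of $(EP)$ which are bounded from below, so the appropriate uniqueness statement applies — Theorem~\ref{uni-sup} in the superquadratic case, and the corresponding uniqueness result of Section~\ref{Uni} in the subquadratic case (where one additionally uses that bounded from below solutions belong to $\Phi_\gamma$, by Propositions~\ref{bainf1} and \ref{batinf}). In either case the conclusion includes the equality of the ergodic constants, so $\lambda = \bar\lambda = \lambda^*$, which proves the proposition. The only mildly delicate step is the second one: Theorem~\ref{exist-bfb} as stated does not record the ergodic constant of the solution it produces, so one has to go back into its proof to extract the bound $\bar\lambda \ge \lambda^*$ and then sandwich it with the trivial bound $\bar\lambda \le \lambda^*$; everything else is a direct citation of results already established.
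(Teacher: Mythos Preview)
Your argument is correct, and it takes a genuinely different route from the paper.

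The paper does \emph{not} construct a bounded from below solution with constant $\lambda^*$ and then invoke the uniqueness theorems. Instead, it works directly with the comparison Propositions~\ref{comp-ext} and~\ref{comp-subq} and the Strong Maximum Principle. Starting from any solution $\psi$ of $(EP)_{\lambda^*}$ (given by Theorem~\ref{exist-CV}, not assumed bounded from below), the paper observes that since $\lambda\le\lambda^*$, $\psi$ is a subsolution of $(EP)_\lambda$. Normalizing so that $\max_{\partial B_R}(\psi-\phi)=0$, it then replaces $\psi$ by $\chi:=\max(\psi,\phi)$, which is still a subsolution of $(EP)_\lambda$ but now bounded from below (resp.\ in $\Phi_\gamma$) thanks to $\phi$. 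The exterior comparison results then give $\chi\le\phi$ in $B_R^c$, hence $\psi\le\phi$ there; combined with the interior comparison and the Strong Maximum Principle, this forces $\psi=\phi$ and hence $\lambda=\lambda^*$.

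What each approach buys: the paper's argument calls only on the propositions that appear in the hypothesis and on Theorem~\ref{exist-CV}, and in particular does not need to reopen the proof of Theorem~\ref{exist-bfb}. The max trick $\chi=\max(\psi,\phi)$ is the key device that sidesteps the issue that $\psi$ need not be bounded from below. Your approach is conceptually very clean --- once you know that Theorem~\ref{exist-bfb} actually produces a solution at the level $\lambda^*$, the proposition becomes an immediate corollary of the uniqueness theorems --- but the price is that you must extract the inequality $\bar\lambda\ge\lambda^*$ from inside the proof of Theorem~\ref{exist-bfb} rather than citing its statement. Your self-assessment of this as the ``only mildly delicate step'' is accurate.
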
 
\begin{proof} Let $\psi$ be a solution of $(EP)_{\lambda^*}$ (see Theorem~\ref{exist-CV}). We know that $\lambda \leq \lambda^*$. It remains to show that $\lambda^* \leq \lambda$. 

We notice that $\psi$ is a subsolution for $(EP)_\lambda$. Choosing $R$ large enough, we may assume that $\max_{\partial{B}_R}(\psi-\phi)=0$ and then consider $\chi:= \max(\psi,\phi)$ which is still a bounded from below subsolution of $(EP)_\lambda$ with $\max_{\partial{B}_R}(\chi-\phi)=0$. 

We deduce from one of the comparison results in $B^c_R$ (either the sub or superquadratic one) that $\chi \leq \phi$ in $\R^m$, i.e. $\psi \leq \phi$ in $\R^m$. But by the Strong Maximum Principle, $\psi-\phi$ achieving a maximum at a point of $\partial{B}_R$, we conclude that $\psi=\phi$ in $\R^m$ and that $\lambda =\lambda^*$.
\end{proof}

\subsection{Convergence of approximations, large time behavior}

Our first result is the

\begin{proposition}\label{conv-app}Under the assumptions of Proposition~\ref{uni-l-star}, we have the following

(i) Let $(\phi^R,\lambda_R)$ be, as in the proof of Theorem~\ref{exist-bfb}, a solution of the ergodic problem in $B_R$ with state-constraints boundary conditions and with $\phi^R(0)=0$. Then, as $R\to +\infty$, $(\phi^R,\lambda_R)$ converges to $(\phi,\lambda^*(f))$ where $\phi$ is a solution of $G[\phi]=\lambda^*(f)$ in $\R^m$.\\

(ii) Let, for $R\gg 1$, $(\psi_R,\tilde \lambda_R)$ be a solution of an ergodic problem associated to $f_R \in W^{1,\infty}_{loc}(\R^m)$ where the $f^R$ are uniformly bounded in $W^{1,\infty}_{loc}$ and which, in the subquadratic case, satisfy (H0) with uniform constants. If $f_R\to f$ locally uniformly where $f$ satisfies (H0)-(H1) and if the $\phi_R$ are uniformly bounded from below, then $\tilde \lambda_R\to \lambda^*(f)$ and $\phi_R-\phi_R(0)$ converges to $\phi$ where $\phi$ is a solution of $G[\phi]=\lambda^*(f)$ in $\R^m$.
\end{proposition}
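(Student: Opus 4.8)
The plan is to prove both statements with the same two ingredients that have already been used repeatedly: uniform-in-$R$ interior estimates (Theorem~A.2 and elliptic regularity, exactly as in the proofs of Theorems~\ref{ber} and \ref{exist-bfb}) to pass to a limit solving $(EP)$ at the limiting ergodic level, and then the characterisation of $\lambda^*$ from Proposition~\ref{uni-l-star} together with uniqueness of bounded from below solutions (Theorem~\ref{uni-sup} or its subquadratic analogue) to identify that limit. Since every candidate limit will be normalised to vanish at the origin and will be bounded from below, it is uniquely determined, so that not only subsequences but the whole families converge.

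For (i), the proof of Theorem~\ref{exist-bfb} already gives everything except the value of the limiting constant: there $R\mapsto\lambda_R$ is non-increasing with $\lambda_R\ge\lambda^*(f)$, so the $\lambda_R$ stay bounded; coercivity of $f$ and boundedness of $\lambda_R$ confine the minimum points of the $\phi^R$ to a fixed compact set; hence, with $\phi^R(0)=0$ and Theorem~A.2, the $\phi^R$ are bounded in $C^{2,\iota}_{loc}(\R^m)$ uniformly in $R$, and along any $R_n\to+\infty$ a subsequence converges to a $C^2$ solution $(\phi,\lambda_\infty)$ of $(EP)_{\lambda_\infty}$ with $\phi(0)=0$ and $\phi$ bounded from below. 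Now Proposition~\ref{uni-l-star} forces $\lambda_\infty=\lambda^*(f)$, and the uniqueness theorem together with $\phi(0)=0$ makes $\phi$ the unique such solution; the limit being independent of the subsequence, $(\phi^R,\lambda_R)\to(\phi,\lambda^*(f))$.

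For (ii), I would run the same argument for $\hat\psi_R:=\psi_R-\psi_R(0)$, the single new point being the uniform boundedness of $\tilde\lambda_R$. The bound from above is obtained verbatim as in Theorem~\ref{exist-CV}: were $\tilde\lambda_R\to+\infty$ along a subsequence, $\tilde\lambda_R^{-1/\theta}\hat\psi_R$ would be locally uniformly bounded in $W^{1,\infty}$ (Theorem~A.2 and the uniform local bounds on $f_R$) and converge to some $w$ with $\frac{1}{\theta}|Dw|^\theta\le-1$, which is impossible. The bound from below is where the hypotheses ``the $\psi_R$ are uniformly bounded from below'' and ``the $f_R$ satisfy (H0) with uniform constants'' (resp. the analogous uniform control in the superquadratic case) are used: penalising $\psi_R$ near a near-minimiser by $\delta|y-y_\delta|^2$ produces an interior point $z$ at which $|D\psi_R(z)|$ and $(\Delta\psi_R(z))^-$ are as small as one wishes, whence $\tilde\lambda_R\ge f_R(z)-o(1)$, which is bounded below uniformly in $R$. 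Once $\tilde\lambda_R$ is bounded, Theorem~A.2 and $\hat\psi_R(0)=0$ give uniform $C^{2,\iota}_{loc}$ bounds on $\hat\psi_R$; using $f_R\to f$ locally uniformly and stability of viscosity solutions, any subsequential limit $\phi$ solves $(EP)_{\lambda_\infty}$ for $f$, with $\phi(0)=0$, and the uniform lower bound on the $\psi_R$ makes $\phi$ bounded from below. Since $f$ satisfies (H0)--(H1), Proposition~\ref{uni-l-star} applies and yields $\lambda_\infty=\lambda^*(f)$; uniqueness and $\phi(0)=0$ identify $\phi$, and we conclude $\tilde\lambda_R\to\lambda^*(f)$, $\psi_R-\psi_R(0)\to\phi$.

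The main obstacle, mild in (i) and more serious in (ii), is to make every estimate uniform in $R$ near infinity. In (i), coercivity of $f$ settles it by trapping the minima of the $\phi^R$ in a fixed compact. In (ii), with only local uniform control of the $f_R$, the delicate points are precisely the lower bound on the ergodic constants $\tilde\lambda_R$ and the bounded from below character of the limit $\phi$, for which a bare maximum/minimum-principle localisation does not suffice: one really has to combine the penalisation argument above with the uniform lower bound on the $\psi_R$ and the uniform constants in (H0).
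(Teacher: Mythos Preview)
The paper does not give a proof of this proposition at all: it simply says ``We leave the easy proof of this result to the reader.'' Your proposal fills in precisely the argument the authors have in mind---compactness via the interior gradient estimates of Theorem~A.2 (as in Theorems~\ref{ber} and~\ref{exist-bfb}) to produce a bounded-from-below limit solution, followed by the identification $\lambda_\infty=\lambda^*(f)$ via Proposition~\ref{uni-l-star} and the uniqueness theorem, and finally the observation that uniqueness of the normalised limit upgrades subsequential convergence to convergence of the whole family. For (i) this is literally a re-reading of the proof of Theorem~\ref{exist-bfb} plus one invocation of Proposition~\ref{uni-l-star}.

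Your treatment of (ii) is also correct, and you are right to flag the lower bound on $\tilde\lambda_R$ as the only point requiring care. Your penalisation-at-a-near-minimiser argument works cleanly in the subquadratic case because (H0) with uniform constants includes a uniform lower bound on the $f_R$, so $f_R(z)$ is bounded below regardless of where the penalised minimum $z$ lands. In the superquadratic case the statement as written does not explicitly assume the $f_R$ are uniformly bounded from below, and your parenthetical ``analogous uniform control'' is an honest acknowledgement of this; in the paper's only application (the proof of Theorem~4.3) the $f_R$ are periodic truncations of a coercive $f$ and are therefore uniformly bounded below, so the issue does not arise in practice.
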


We leave the easy proof of this result to the reader and turn to the application to the large time behavior of solutions of the associated parabolic equation
\begin{equation}\label{parabolic-eqn}
u_t - \frac{1}{2}\Delta u+\frac{1}{\theta} | D u|^{\theta}=f  \quad \text{in } \mathbb{R}^m \times (0,+\infty)\; ,
\end{equation}
\begin{equation}\label{parabolic-id}
u(x,0)=u_0(x)  \quad \text{in } \mathbb{R}^m\; ,
\end{equation}
where $u_0$ is locally Lipschitz continuous coercive function.

\begin{theorem} Under the assumptions of Proposition~\ref{uni-l-star} and if $u_0$ is locally Lipschitz continuous coercive function, which satisfies in the subquadratic case
$$ c_0|y|^\gamma-c_1 \leq u_0(y) \leq c_1(1+|y|^\gamma) \quad \text{in } \mathbb{R}^m\; ,$$
for some constants $c_0,c_1>0$, there exists a unique, global in time, solution $u$ of (\ref{parabolic-eqn})-(\ref{parabolic-id}) which satisfies
$$ \lim_{t\to +\infty}\, \left[\frac{u(x,t)}{t}\right]=\lambda^*(f)\quad \text{ locally uniformly in } \mathbb{R}^m\; .$$
\end{theorem}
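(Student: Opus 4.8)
The plan is to obtain $u$ as a limit of solutions of the parabolic problem posed on the balls $B_R$ and then to squeeze $u(x,t)/t$, as $t\to+\infty$, between ergodic constants of bounded--domain problems which, by Proposition~\ref{conv-app}, converge to $\lambda^*(f)$. For the existence of $u$, I would solve, for each $R>0$, equation \eqref{parabolic-eqn} in $B_R\times(0,+\infty)$ with initial datum $u_0|_{B_R}$ and state--constraint (or Dirichlet) boundary conditions, using parabolic regularity together with the local gradient bounds of Appendix~A — exactly as in the proof of Theorem~\ref{ber} — to pass to the limit $R\to+\infty$ along a subsequence. Uniqueness would follow from a comparison principle for \eqref{parabolic-eqn} in the appropriate growth class at infinity: in the subquadratic case this is the class of functions trapped, up to an affine--in--$t$ term, between $c^{-1}|y|^\gamma-c$ and $c(1+|y|^\gamma)$, and one checks that $g(x)\pm\lambda^*t$ — with $g$ a polynomially growing super/subsolution of $(EP)_{\lambda^*}$, which can be produced explicitly using $(H1)$ — confines any solution; in the superquadratic case one passes through the Hopf--Cole change $z=-e^{-u}$, for which $z(\cdot,0)=-e^{-u_0}$ is bounded because $u_0$ is coercive, and invokes the arguments behind Proposition~\ref{comp-ext}.

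For the bound $\limsup_{t\to+\infty}u(x,t)/t\le\lambda^*(f)$, let $(\lambda_R,\phi^R)$ be the state--constraint ergodic solution on $B_R$ with $\phi^R(0)=0$ used in the proof of Theorem~\ref{exist-bfb}. Since $-\tfrac12\Delta\phi^R+\tfrac1\theta|D\phi^R|^\theta=f-\lambda_R$ in $B_R$, the function $\phi^R(x)+\lambda_R t+C_R$ solves \eqref{parabolic-eqn} in $B_R$ and is a state--constraint supersolution up to $\partial B_R$; as $\phi^R$ blows up at $\partial B_R$ (subquadratic case) or enjoys the state--constraint property (superquadratic case), and $u_0$ is bounded on $\overline{B_R}$, one may choose $C_R$ so that it dominates $u$ on the parabolic boundary. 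The state--constraint comparison principle then gives $u(x,t)\le\phi^R(x)+\lambda_R t+C_R$ on $B_R\times(0,+\infty)$, hence $\limsup_t u(x,t)/t\le\lambda_R$ locally uniformly in $x$. Letting $R\to+\infty$ and recalling that $\lambda_R$ is nonincreasing with $\lambda_R\downarrow\lambda^*(f)$ (as in the proof of Theorem~\ref{exist-bfb} and in Proposition~\ref{conv-app}) yields the claim.

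For the bound $\liminf_{t\to+\infty}u(x,t)/t\ge\lambda^*(f)$, fix $\mu<\lambda^*(f)$ and, by Corollary~\ref{ins}, pick a solution $\phi_\mu$ of $(EP)_\mu$; then $\phi_\mu(x)+\mu t$ is a subsolution of \eqref{parabolic-eqn}. On a fixed ball $B_R$, after subtracting a large constant it lies below $u_0$ on $\overline{B_R}\times\{0\}$, and one controls it from above by $u$ on the lateral boundary by combining the crude a priori bound $u(x,t)\ge\min_{\mathbb R^m}u_0+(\min_{\mathbb R^m}f)\,t$ (obtained by comparison with the subsolution $\min_{\mathbb R^m}u_0+(\min_{\mathbb R^m}f)\,t$ of \eqref{parabolic-eqn}) with a boundary--layer argument ensuring that the interior large--time slope is not degraded by the finite boundary values; comparison on $B_R\times(0,+\infty)$ gives $u(x,t)\ge\phi_\mu(x)+\mu t-C_R$ there, so $\liminf_t u(x,t)/t\ge\mu$, and one lets $\mu\uparrow\lambda^*(f)$. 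Combining the two one--sided bounds gives $\lim_{t\to+\infty}u(x,t)/t=\lambda^*(f)$, the uniformity on compact sets being inherited from the local boundedness of the barriers.

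The delicate point — and the reason one cannot simply argue with global barriers $\phi_\mu(x)+\mu t+\mathrm{const}$ on $\mathbb R^m$, as one would in the periodic or Neumann settings — is that the ergodic (sub)solutions and $u_0$ need not stay within a bounded distance of one another at infinity: the growth constants appearing in $\Phi_\gamma$ and in the hypothesis on $u_0$ are a priori unrelated, and in the superquadratic case $u_0$ may even grow slower than every ergodic solution. This is precisely why the lower bound is carried out by localizing on $B_R$ and handling the behaviour near $\partial B_R$ with care, using the comparison results of Section~\ref{Uni} and the convergence of the bounded--domain ergodic constants from Proposition~\ref{conv-app}; making this localization rigorous is the main obstacle of the proof.
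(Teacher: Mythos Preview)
Your upper bound via the state-constraint ergodic pair $(\phi^R,\lambda_R)$ is exactly the paper's argument. The lower bound, however, has a genuine gap that you yourself flag as ``the main obstacle'' without resolving. When you try to compare $\phi_\mu(x)+\mu t-C_R$ with $u$ on $B_R\times(0,+\infty)$, the lateral boundary $\partial B_R\times(0,+\infty)$ is the problem: your crude bound $u(x,t)\ge \min u_0 + (\min f)\,t$ has slope $\min_{\mathbb R^m} f$, which is in general \emph{strictly smaller} than $\mu$ (recall $\lambda^*(f)\ge\min f$ with equality only in degenerate cases, and you want $\mu$ close to $\lambda^*(f)$). Hence for large $t$ the inequality $\phi_\mu+\mu t-C_R\le u$ fails on $\partial B_R$, no matter how large $C_R$ is, and the unspecified ``boundary-layer argument'' cannot repair a gap that grows linearly in $t$.

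The paper avoids this localization entirely by a different device: it approximates $f$ from \emph{below} by a sequence of \emph{periodic} functions $f_R$. Concretely, since $f$ is coercive, $\min(f,R)$ equals $R$ outside some cube $[-S_R,S_R]^m$, and one takes $f_R$ to be its $2S_R\mathbb Z^m$-periodic extension. The periodic ergodic problem for $f_R$ has a solution $(\psi_R,\tilde\lambda_R)$ with $\psi_R$ \emph{bounded}; since $u_0$ is coercive (hence bounded from below), one can shift $\psi_R$ so that $\psi_R\le u_0$ on all of $\mathbb R^m$, and then comparison on the whole space gives $u(x,t)\ge\psi_R(x)+\tilde\lambda_R t$ globally, with no lateral boundary to worry about. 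Proposition~\ref{conv-app}(ii) then gives $\tilde\lambda_R\to\lambda^*(f)$. This periodic-approximation trick is the missing idea in your proposal.
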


\begin{proof} The existence and uniqueness properties are obtained by borrowing the arguments we already use for proving the existence and uniqueness properties for the solutions of the ergodic problem, so we skip them. 

To prove the ergodic limit property, we argue by approximation : if $(\phi^R,\lambda_R)$ is a solution of the ergodic problem in $B_R$ with state-constraints boundary conditions, we have by the comparison results of Tchamba Tabet \cite{T10} in the superquadratic case or Porretta, Tchamba and the first author\cite{BPT10} in the subquadratic case [adding, if necessary a constant to $\phi^R$]
$$ u(x,t) \leq \phi^R (x) + \lambda_R t \quad \text{in } \mathbb{R}^m \times (0,+\infty)\; ,$$
since $u$ is a subsolution of the parabolic state constraint problem in $B_R \times (0,+\infty)$ and $\phi^R (x) + \lambda_R t$ is a solution.
Therefore
$$ \limsup_{t\to +\infty}\, \left[\frac{u(x,t)}{t}\right]\leq \lambda_R \quad \text{ locally uniformly in } \mathbb{R}^m\; .$$
And since this is true for any $R$, we conclude that the same inequality holds for $\lambda^*(f)$ by Proposition~\ref{conv-app}.

It remains to prove the opposite inequality with the $\liminf$ and, to do so, we approximate $f$ from below by $f_R$ defined in the following way: since $f$ is coercive, $\min(f,R)(y)$ is equal to $R$ for $|y|$ large enough, say for $|y|\geq S_R$ and clearly $S_R\to +\infty$ as $R\to +\infty$. We define $f_R$ as being the $2S_R\Z^m$-periodic function which is equal to $\min(f,R)$ on $[-S_R,S_R]^m$. By results in the periodic case (see \cite{bs01} and references therein), there exists a solution $(\psi_R,\tilde \lambda_R)$ of the ergodic problem associated to $f_R$ and by (easier) comparison results, we have
$$ u(x,t) \geq \psi_R (x) + \tilde \lambda_R t \quad \text{in } \mathbb{R}^m \times (0,+\infty)\; ,$$
shifting again the periodic function $\psi_R (x)$ in order to have $\psi_R (x) \leq u_0(x)$ in $\R^m$.

Therefore
$$ \liminf_{t\to +\infty}\, \left[\frac{u(x,t)}{t}\right]\geq \tilde \lambda_R \quad \text{ locally uniformly in } \mathbb{R}^m\; .$$
And since this is true for any $R$, we conclude that the same inequality holds for $\lambda^*(f)$ by Proposition~\ref{conv-app}.
\end{proof}

\subsection{Continuity properties}

In applications to singular perturbation type problems, as it is the case for homogenization type problems, the continuity properties of $\lambda^*(f)$ in $f$ are important. The next result refines these continuity properties.

\begin{proposition} Assume that $f_1,f_2$ satisfies (H0)-(H1) with $\alpha \geq 1$ and with a fixed $f_0$ and set
$$ m := \sup_{\R^m}(\frac{|f_1(y)-f_2(y)|}{1+|y|^\alpha})\; .$$
We have 
$$|\lambda^*(f_2)-\lambda^*(f_1)|\leq \frac{f_0m}{1+f_0m}\max(\lambda^*(f_1) ,\lambda^*(f_2))\; .$$
\end{proposition}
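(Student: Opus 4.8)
The plan is to compare the two ergodic constants by a simple scaling argument, exploiting the fact that the Hamiltonian $p\mapsto \tfrac1\theta|p|^\theta$ is positively homogeneous of degree $\theta>1$, so that multiplying a solution by a constant $c\in(0,1)$ is favourable. We may assume $m>0$, otherwise $f_1=f_2$ and there is nothing to prove. By Theorem~\ref{exist-CV}, there is a $C^2$-solution $\phi$ of
\[
-\tfrac12\Delta\phi+\tfrac1\theta|D\phi|^\theta=f_1-\lambda^*(f_1)\quad\text{in }\R^m.
\]
Since $f_2$ satisfies (H1) we have $1+|y|^\alpha\le f_0 f_2(y)$ for all $y\in\R^m$; combining this with $|f_1(y)-f_2(y)|\le m(1+|y|^\alpha)$ gives $f_1\le f_2+f_0 m\,f_2=(1+f_0 m)f_2$, i.e.\ $c f_1\le f_2$ with $c:=(1+f_0 m)^{-1}\in(0,1)$.

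The second step is to check that $c\phi$ is a subsolution of the ergodic problem for $f_2$ with ergodic constant $c\lambda^*(f_1)$. Using the equation for $\phi$ to substitute for $\Delta\phi$, one finds
\[
-\tfrac12\Delta(c\phi)+\tfrac1\theta|D(c\phi)|^\theta=c\big(f_1-\lambda^*(f_1)\big)+\frac{c^\theta-c}{\theta}|D\phi|^\theta\le c f_1-c\lambda^*(f_1)\le f_2-c\lambda^*(f_1),
\]
where the first inequality uses $c^\theta\le c$ (valid since $0<c<1$ and $\theta>1$, which is exactly where the superlinearity enters) and the second uses $cf_1\le f_2$. Hence $c\phi$ is a $C^2$-subsolution of $(EP)_{c\lambda^*(f_1)}$ associated to $f_2$, and by the definition of $\lambda^*(f_2)$ this yields $\lambda^*(f_2)\ge c\lambda^*(f_1)$. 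Since (H1) forces $f_i\ge f_0^{-1}>0$, hence $\lambda^*(f_i)\ge\min_{\R^m}f_i>0$ (as in the proof of Theorem~\ref{exist-CV}), we may rearrange to
\[
\lambda^*(f_1)-\lambda^*(f_2)\le(1-c)\lambda^*(f_1)=\frac{f_0 m}{1+f_0 m}\,\lambda^*(f_1).
\]

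Finally, the quantity $m$ is symmetric in $f_1$ and $f_2$, so exchanging their roles gives the same argument with $\lambda^*(f_2)-\lambda^*(f_1)\le\frac{f_0 m}{1+f_0 m}\lambda^*(f_2)$; combining the two one-sided bounds produces
\[
|\lambda^*(f_2)-\lambda^*(f_1)|\le\frac{f_0 m}{1+f_0 m}\max\big(\lambda^*(f_1),\lambda^*(f_2)\big),
\]
which is the claim. There is no real obstacle here; the only point requiring a little care is the sign of the term $\frac{c^\theta-c}{\theta}|D\phi|^\theta$, which is nonpositive precisely because we scale \emph{down} ($c<1$) and the Hamiltonian is superlinear ($\theta>1$) --- scaling up would reverse this sign and break the argument --- together with the use of (H1) both to bound $f_1$ by a multiple of $f_2$ and to guarantee $\lambda^*(f_i)>0$.
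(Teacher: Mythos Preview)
Your proof is correct and follows essentially the same route as the paper: take a solution $\phi$ for $f_1$, scale it by $c=(1+f_0m)^{-1}\in(0,1)$, use (H1) to bound $cf_1\le f_2$, and conclude $c\lambda^*(f_1)\le\lambda^*(f_2)$ from the definition of $\lambda^*$, then symmetrize. Your justification that $c\phi$ is a subsolution is slightly more explicit (you compute the $\frac{c^\theta-c}{\theta}|D\phi|^\theta$ term directly and note it is nonpositive), while the paper simply asserts that $t\phi_1$ is a subsolution with right-hand side $t(f_1-\lambda^*(f_1))$; but this is the same observation.
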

\begin{proof} By definition of $m$, we have
$$ f_1(y)\leq f_2(y)+ m(1+|y|^\alpha)\; .$$
Let $\phi_1$ be a solution of $(EP)_{\lambda^*}$ associated to $f_1$. If we consider $t\phi_1$ for $t\in(0,1)$, it is a subsolution of the equation with a right hand side with is $t(f_1-\lambda^*(f_1))$. On the other hand
$$
t f_1(y)\leq t f_2(y)+ tm(1+|y|^\alpha)\leq t(1+f_0m)f_2(y) \; .$$
Choosing $t=(1+f_0m)^{-1}$, we deduce that $t\phi_1$ is subsolution with right-hand side $f_2 (y)-t\lambda^*(f_1)$ and therefore
$$ t\lambda^*(f_1) \leq \lambda^*(f_2)\; .$$
From which we deduce
$$ (t-1)\lambda^*(f_1) \leq \lambda^*(f_2)-\lambda^*(f_1)\; ,$$
i.e.
$$ -\frac{f_0m}{1+f_0m}\lambda^*(f_1) \leq \lambda^*(f_2)-\lambda^*(f_1)\; ,$$
and reversing the role of $f_1,f_2$, the inequality we wanted to prove.
\end{proof}

\begin{Remark}Of course, if we want to take into account function $f$ which satisfy a little bit more general assumption than (H1) (in particular, for the lower bound), we have just to add a suitable constant to $f_1$ and $f_2$, $\lambda^*(f_1), \lambda^*(f_2)$ being shifted by the same constant according to Proposition~\ref{lambda-fp}.
\end{Remark}
Finally in order to have a (uniform in $f_0$) estimate of the $\lambda^*(f_i)$, $i=1,2$ in the previous result, we use the
\begin{proposition} Let $c>0$. If $\alpha \geq 1$, then
\begin{equation*} 
\lambda^*(c|y|^\alpha)=c^{\frac{\theta^*}{\theta^*+\alpha}} \lambda^*(|y|^\alpha).
\end{equation*}
If $0<\alpha<1$, then
\begin{equation*}
0 \leq \lambda^*(c(1+|y|^2)^\frac{\alpha}{2}) \leq c+c^{\frac{\theta^*}{\theta^*+1}} \lambda^*(|y|).
\end{equation*}
\end{proposition}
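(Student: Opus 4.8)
The plan is to exploit the scaling invariance of the equation, using the fact (Theorem~\ref{exist-CV}) that $(EP)_{\lambda^*}$ is always solved by a genuine $C^2$-solution, hence a subsolution.

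For the first identity, fix $c>0$ and $\alpha\ge 1$, so that $|y|^\alpha\in W^{1,\infty}_{loc}(\R^m)$ is bounded from below and $\lambda^*(|y|^\alpha)$ is well defined. Given a subsolution $\phi$ of $(EP)_\lambda$ with right-hand side $f(y)=|y|^\alpha$, I would look at $\psi(y):=A\,\phi(By)$ for constants $A,B>0$ to be chosen. A direct computation gives
\[ -\tfrac12\Delta\psi(y)+\tfrac1\theta|D\psi(y)|^\theta = AB^2\Bigl(-\tfrac12(\Delta\phi)(By)\Bigr)+(AB)^\theta\,\tfrac1\theta|(D\phi)(By)|^\theta . \]
The choice $A=B^{(2-\theta)/(\theta-1)}$ makes $AB^2=(AB)^\theta=:\kappa=B^{\theta^*}$, so the right-hand side above equals $\kappa\bigl(-\tfrac12(\Delta\phi)(By)+\tfrac1\theta|(D\phi)(By)|^\theta\bigr)\le \kappa(|By|^\alpha-\lambda)=\kappa B^\alpha|y|^\alpha-\kappa\lambda$. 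Taking $B=c^{1/(\theta^*+\alpha)}$ gives $\kappa B^\alpha=B^{\theta^*+\alpha}=c$ and $\kappa=c^{\theta^*/(\theta^*+\alpha)}$, hence $\psi$ is a subsolution of $(EP)_{\kappa\lambda}$ with right-hand side $c|y|^\alpha$. Since $\kappa$ depends only on $c$ and $\alpha$, applying this to $\lambda=\lambda^*(|y|^\alpha)$ yields $c^{\theta^*/(\theta^*+\alpha)}\lambda^*(|y|^\alpha)\le\lambda^*(c|y|^\alpha)$. The reverse inequality comes from reversibility: the same transformation applied with $c$ replaced by $c^{-1}$, starting from a solution of $(EP)_{\lambda^*(c|y|^\alpha)}$ with right-hand side $c|y|^\alpha$, produces a subsolution for $c^{-1}\cdot c|y|^\alpha=|y|^\alpha$ with scaling factor $\kappa^{-1}$, giving $c^{-\theta^*/(\theta^*+\alpha)}\lambda^*(c|y|^\alpha)\le\lambda^*(|y|^\alpha)$. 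The two inequalities together give $\lambda^*(c|y|^\alpha)=c^{\theta^*/(\theta^*+\alpha)}\lambda^*(|y|^\alpha)$.

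For the case $0<\alpha<1$, the lower bound $0\le\lambda^*\bigl(c(1+|y|^2)^{\alpha/2}\bigr)$ is immediate: since $f(y)=c(1+|y|^2)^{\alpha/2}\ge c$, the constant functions are subsolutions of $(EP)_c$ (as in the proof of Corollary~\ref{ebfb}), so $\lambda^*\bigl(c(1+|y|^2)^{\alpha/2}\bigr)\ge c>0$. For the upper bound I would use the elementary inequality $(1+|y|^2)^{\alpha/2}\le(1+|y|^2)^{1/2}\le 1+|y|$, valid because $\alpha/2\le 1/2$, $1+|y|^2\ge1$, and $\sqrt{1+t^2}\le 1+t$ for $t\ge0$; hence $c(1+|y|^2)^{\alpha/2}\le c+c|y|$. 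By the translation and monotonicity properties of $\lambda^*$ (Proposition~\ref{lambda-fp}~(i) and (ii)),
\[ \lambda^*\bigl(c(1+|y|^2)^{\alpha/2}\bigr)\le\lambda^*(c+c|y|)=c+\lambda^*(c|y|), \]
and the first identity with $\alpha=1$ gives $\lambda^*(c|y|)=c^{\theta^*/(\theta^*+1)}\lambda^*(|y|)$, which is exactly the claimed bound.

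The only delicate point is the scaling computation in the second paragraph: one has to verify that the exponents are chosen so that both the second-order and the gradient terms rescale by the same factor $\kappa=B^{\theta^*}$, and that imposing $\kappa B^\alpha=c$ produces precisely $\kappa=c^{\theta^*/(\theta^*+\alpha)}$ — this is where the exponent $\theta^*/(\theta^*+\alpha)$ comes from. One should also note that, being an affine change of variables composed with multiplication by a positive constant, the transformation maps (sub)solutions to (sub)solutions while preserving the inequality, and that $\kappa$ is independent of $\lambda$, so that the supremum defining $\lambda^*$ transforms correctly. Everything else is routine.
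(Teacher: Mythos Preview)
Your proof is correct and follows essentially the same route as the paper: the same scaling $\psi(y)=B^{(2-\theta)/(\theta-1)}\phi(By)$ (the paper's $\beta$ is your $B$) to match the Laplacian and gradient terms, the same choice $B^{\theta^*+\alpha}=c$, and the same reverse application to get the opposite inequality; for $0<\alpha<1$ you use the same monotonicity/translation argument combined with the first identity at $\alpha=1$, with only a cosmetic difference in the intermediate inequality (the paper inserts $c(1+|y|^\alpha)$ between $c(1+|y|^2)^{\alpha/2}$ and $c(1+|y|)$, whereas you pass directly).
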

\begin{proof} 
We first observe that $|y|^\alpha$ satisfy assumption $(H0)$ when $\alpha \geq 1$ and is clearly nonnegative. Let $(\lambda^*(|y|^\alpha),\phi_1)$ be a solution of $(EP)$ with $f(y)=|y|^\alpha$ given by Theorem~\ref{exist-CV}. We will now construct a solution of $(EP)$ with $f(y)=c|y|^\alpha$ by considering $\phi_2(y)=\beta^{\frac{2-\theta}{\theta-1}}\phi_1(\beta y)$ and the right choice of $\beta$.

We have,
\begin{align*}
-\frac{1}{2} \Delta \phi_2(y)+\frac{1}{\theta} |D \phi_2(y)|^\theta &= -\frac{1}{2} \Delta (\beta^{\frac{2-\theta}{\theta-1}}\phi_1(\beta y))+\frac{1}{\theta} |D (\beta^{\frac{2-\theta}{\theta-1}}\phi_1(\beta y))|^\theta \\
&=-\frac{1}{2} \beta^{\frac{\theta}{\theta-1}} \Delta (\phi_1(\beta y))+\frac{1}{\theta} \beta^{\frac{\theta}{\theta-1}} |D \phi_1(\beta y)|^\theta\\
&=\beta^{\frac{\theta}{\theta-1}}  \big(-\frac{1}{2} \Delta (\phi_1(\beta y))+\frac{1}{\theta} |D \phi_1(\beta y)|^\theta \big)\\
&=\beta^{\theta^*} \big( \beta^\alpha |y|^\alpha - \lambda^*(|y^\alpha|) \big)
\end{align*}
using the chain rule for the second equality and the fact that $(\lambda^*(|y|^\alpha),\phi_1)$ is a solution of $(EP)$ with $f(y)=|y|^\alpha$ and $\frac{\theta}{\theta-1}=\theta^*$ for the last.

Therefore
\begin{align*}
\beta^{\theta^*} \lambda^*(|y^\alpha|) -\frac{1}{2} \Delta \phi_2(y)+\frac{1}{\theta} |D \phi_2(y)|^\theta =\beta^{\theta^*+\alpha} |y|^\alpha
\end{align*}
and choosing $\beta^{\theta^*+\alpha}=c$, i.e., $\beta=c^{\frac{1}{\theta^*+\alpha}}$ we arrive at
\begin{align*}
c^{\frac{\theta^*}{\theta^*+\alpha}} \lambda^*(|y^\alpha|) -\frac{1}{2} \Delta \phi_2(y)+\frac{1}{\theta} |D \phi_2(y)|^\theta =c |y|^\alpha.
\end{align*}
By definition of $\lambda^*(c |y|^\alpha)$, we obtain
\begin{equation*}
c^{\frac{\theta^*}{\theta^*+\alpha}} \lambda^*(|y^\alpha|) \leq \lambda^*(c |y|^\alpha).
\end{equation*}
The reverse inequality is obtained in an equivalent manner by looking at the solution $(\lambda^*(c|y|^\alpha),\psi_1)$ of $(EP)$ with $f(y)=c|y|^\alpha$ and then constructing a solution of $(EP)$ with $f(y)=|y|^\alpha$ by considering $\psi_2(y)=\beta^{\frac{2-\theta}{\theta-1}}\psi_1(\beta y)$ and $\beta=(\frac{1}{c})^{\frac{1}{\theta^*+\alpha}}$. \\

In the case when $0<\alpha<1$, we look at $c(1+|y|^2)^{\frac{\alpha}{2}}$ which is in $W^{1,\infty}_{loc}(\mathbb{R}^m)$ and is non-negative. By the proof of Theorem~\ref{exist-CV}, $\lambda^*(c(1+|y|^2)^{\frac{\alpha}{2}}) \geq 0$, and since 
\begin{equation*}
c(1+|y|^2)^{\frac{\alpha}{2}} \leq c(1+|y|^\alpha) \leq c(1+|y|),
\end{equation*}
using Propositions 2.9, 2.8 and the first part of this proof with $\alpha \geq 1$, 
\begin{equation*}
0 \leq \lambda^*(c(1+|y|^2)^\frac{\alpha}{2}) \leq c+c^{\frac{\theta^*}{\theta^*+1}} \lambda^*(|y|)
\end{equation*}
as we wished to show.
\end{proof}

\begin{appendices}

\section{- Gradient Estimate}

In this Appendix, we present some results and estimates needed in this article. We start with the following result 
\begin{theorem} For any $R>0$, $f_1 \in W^{1,\infty}(B_R)$ and $g_1 \in C^{2,\iota} (\partial B_R)$ where $\iota \in (0,1)$. Then,
\begin{itemize}
\item[(a)] For any $\epsilon > 0$, the Dirichlet problem
\begin{equation*}
-\frac{1}{2} \Delta \phi (y)+ \frac{1}{\theta} |D \phi(y)|^{\theta}+ \epsilon \phi = f_1 \text{ in } B_R, \quad \phi = g_1 \text{ on } \partial B_R, 
\end{equation*}
has a $C^{2,\iota} (\bar{B}_R)$-solution.
\item[(b)] The Dirichlet problem
\begin{equation*}
-\frac{1}{2} \Delta \phi (y)+ \frac{1}{\theta} |D \phi(y)|^{\theta} = f_1 \text{ in } B_R, \quad \phi = g_1 \text{ on } \partial B_R, 
\end{equation*}
has a $C^{2,\iota} (\bar{B}_R)$-solution provided it has a subsolution which is in $C^2(B_R) \cap C(\bar{B}_R) $.
\end{itemize}
\end{theorem}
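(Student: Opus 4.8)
\smallskip
\noindent\textbf{Proof proposal.}
The plan is to prove (a) by the method of continuity, the only nontrivial ingredient being a priori $C^{2,\iota}(\bar B_R)$ estimates for solutions, and then to obtain (b) by letting $\epsilon\to 0$ in the solutions of (a), using the subsolution only to get a lower bound that is uniform in $\epsilon$.

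\smallskip
\noindent\emph{Part (a).} For $\tau\in[0,1]$ consider the family
\[
-\tfrac12\Delta\phi+\tfrac1\theta|D\phi|^\theta+\epsilon\phi=\tau f_1 \ \text{ in } B_R,\qquad \phi=\tau g_1 \ \text{ on } \partial B_R ,
\]
and let $S\subset[0,1]$ be the set of $\tau$ for which it has a $C^{2,\iota}(\bar B_R)$ solution. Then $0\in S$ (the solution at $\tau=0$ being $\phi\equiv 0$), $S$ is open by the implicit function theorem (the linearized operator $\psi\mapsto -\tfrac12\Delta\psi+|D\phi|^{\theta-2}D\phi\cdot D\psi+\epsilon\psi$ is uniformly elliptic with bounded coefficients and strictly positive zeroth-order coefficient $\epsilon$, hence an isomorphism onto $C^{0,\iota}$ with the prescribed boundary space), and $S$ is closed provided one has a priori bounds in $C^{2,\iota}(\bar B_R)$ uniform in $\tau$; granting the latter, $S=[0,1]$, and $1\in S$ is the assertion. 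It therefore suffices to establish these a priori bounds. The $L^\infty$ bound is immediate from the maximum principle: at an interior extremum $D\phi=0$ and the sign of $\Delta\phi$ forces $\epsilon|\phi|\le \|f_1\|_{L^\infty(B_R)}$ there, so $\|\phi\|_{L^\infty(B_R)}\le \max(\|f_1\|_\infty/\epsilon,\|g_1\|_{L^\infty(\partial B_R)})$, uniformly in $\tau$. The gradient bound is the heart of the matter: in the interior it follows from Bernstein's method applied to $w=|D\phi|^2$ (differentiate the equation, contract with $D\phi$, estimate at an interior maximum of $w$ weighted by a cutoff), where the superlinearity $\theta>1$ supplies the absorbing term and the extra term $\epsilon\phi$ contributes the favorably signed quantity $2\epsilon w$; on $\partial B_R$ it follows from barriers of the form $\tau g_1(x)\pm K\,\omega(\mathrm{dist}(x,\partial B_R))$ with $\omega$ concave and $K$ large, using that $\partial B_R$ is smooth and $g_1\in C^{2,\iota}$. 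Since the coefficient $\tfrac1\theta$ of $|D\phi|^\theta$ does not depend on $\tau$, the resulting bound $\|D\phi\|_{L^\infty(B_R)}\le C$ is uniform in $\tau$. (Alternatively one may invoke the corresponding gradient bounds from the literature on viscous Hamilton--Jacobi equations, e.g.\ those behind Theorem~A.2.) Once $\phi$ is bounded in $C^1$, the right-hand side of $-\tfrac12\Delta\phi=\tau f_1-\tfrac1\theta|D\phi|^\theta-\epsilon\phi$ is bounded in $C^{0,\iota}$ (recall $f_1\in W^{1,\infty}\hookrightarrow C^{0,\iota}$ and $p\mapsto|p|^\theta$ is locally Lipschitz), so $W^{2,p}$ estimates followed by Schauder estimates give the uniform $C^{2,\iota}(\bar B_R)$ bound.

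\smallskip
\noindent\emph{Part (b).} For $\epsilon\in(0,1]$ let $\phi_\epsilon\in C^{2,\iota}(\bar B_R)$ be the solution of (a) just obtained (at $\tau=1$). The function $\bar\phi(y):=A-B|y|^2$ with $B:=\|f_1\|_{L^\infty(B_R)}/m$ (here $m$ is the dimension) and $A:=BR^2+\|g_1\|_{L^\infty(\partial B_R)}$ is nonnegative on $\bar B_R$, satisfies $\bar\phi\ge g_1$ on $\partial B_R$, and obeys $-\tfrac12\Delta\bar\phi+\tfrac1\theta|D\bar\phi|^\theta\ge Bm\ge f_1$ in $B_R$; hence $\bar\phi$ is a supersolution of the $\epsilon$-problem (since $\epsilon\bar\phi\ge0$), and comparison gives $\phi_\epsilon\le\bar\phi\le A$. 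For the lower bound, let $\underline\phi\in C^2(B_R)\cap C(\bar B_R)$ be the given subsolution of (b); then $\underline\phi$ is a subsolution of the $\epsilon$-problem with right-hand side $f_1+\epsilon\|\underline\phi\|_\infty$, while $\phi_\epsilon+\|\underline\phi\|_\infty$ is an exact solution of that problem and dominates $\underline\phi$ on $\partial B_R$, so comparison yields $\phi_\epsilon\ge\underline\phi-\|\underline\phi\|_\infty$. Thus $\|\phi_\epsilon\|_{L^\infty(B_R)}$ is bounded independently of $\epsilon\in(0,1]$, and feeding this into the gradient estimate and the elliptic bootstrap of Part~(a) — both uniform in $\epsilon\in(0,1]$ — produces a uniform bound for $\phi_\epsilon$ in $C^{2,\iota}(\bar B_R)$. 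By Arzel\`a--Ascoli a subsequence converges in $C^2(\bar B_R)$ to a function $\phi$, which is a $C^{2,\iota}(\bar B_R)$ solution of the problem with $\epsilon=0$; this proves (b).

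\smallskip
\noindent\emph{Main obstacle.} Everything is routine except the a priori gradient estimate up to the boundary: its interior part rests essentially on $\theta>1$ (Bernstein), and its boundary part on the regularity of $g_1$ and of $\partial B_R$ via the barrier construction. The remaining steps — the $L^\infty$ bound, the $W^{2,p}$/Schauder bootstrap, the openness step in the continuity method, and the sub/supersolution comparison in Part~(b) — are standard.
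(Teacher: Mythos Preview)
Your proposal is correct and considerably more detailed than the paper's own ``proof,'' which consists solely of citations: claim~(a) is attributed to results in Lions' book, and claim~(b) to a theorem in Lions that, according to the paper, combines the convexity of the operator with the sub/supersolution existence result of Gilbarg--Trudinger (their Theorem~6.14).

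For (a), your continuity-method outline is one standard route to the cited result. The only point worth flagging is that in the subquadratic range $1<\theta<2$ the linearization coefficient $|D\phi|^{\theta-2}D\phi$ is merely $(\theta-1)$-H\"older near zeros of $D\phi$, so the implicit-function step in $C^{2,\iota}$ may require either first working with a smaller H\"older exponent and bootstrapping, or replacing openness by a Leray--Schauder/Schaefer fixed-point argument. This is a technicality, not a gap.

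For (b) your route differs genuinely from the one the paper cites. The cited approach applies the sub/supersolution method (Gilbarg--Trudinger~6.14) directly, with the convexity of $p\mapsto|p|^\theta$ used to supply the structural hypotheses and produce a solution sandwiched between $\underline\phi$ and a supersolution in one step. You instead regularize via the $\epsilon\phi$ term, extract uniform-in-$\epsilon$ $L^\infty$ bounds from the given subsolution and your explicit quadratic supersolution, bootstrap to $C^{2,\iota}$, and pass to the limit. Your approach has the advantage that (b) becomes a corollary of (a) plus elementary comparison; the cited approach avoids the auxiliary $\epsilon$-problem but leans more heavily on the quasilinear machinery of Gilbarg--Trudinger. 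Both are valid.
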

\begin{proof}
Claim (a) is a particular case of results of \cite{Lions}. Claim (b) can be found in Theorem A.1 of \cite{Lions} and it uses the convexity of the operator $I$ and Theorem 6.14 of \cite{GT}.
\end{proof}

The following result appears in \cite{NI3} (see also \cite{LaLi}).
\begin{theorem}
Let $\Omega$ and $\Omega'$ be two bounded open sets in $\mathbb{R}^m$ such that $\bar{\Omega}' \subset \Omega$. For given $\epsilon \in [0, 1)$ and $f_1 \in W^{1,\infty}_{\text{loc}}(\mathbb{R}^m)$, if $\phi \in C^2(\mathbb{R}^m)$ is a solution of the elliptic equation
\begin{equation}\label{Basice}
-\frac{1}{2} \Delta \phi +\frac{1}{\theta} |D\phi|^\theta + \epsilon \phi = f_1 \text{ in } \Omega\; ,
\end{equation}
then, there exists a constant $K> 0$ depending only on $m$, $\theta$ and $\text{dist}(\Omega',\partial \Omega)$ such that
\begin{equation*}
\sup_{\Omega'} |D\phi| \leq K (1 + \sup_{\Omega} |\epsilon \phi|^{\frac{1}{\theta}} + \sup_{\Omega} |f_1|^{\frac{1}{\theta}}+ \sup_{\Omega} |Df_1|^{\frac{1}{2\theta-1}}).
\end{equation*}
In particular, in the case when $\Omega=B_{R}$ and  $\Omega'=B_{R'}$ for some $R\geq R'+1>0$, there exists $K>0$ depending only on $m$ and $\theta>1$ such that 
\begin{equation*}
\sup_{B_{R'}} |D \phi| \leq K(1+\sup_{B_{R}}|f_1|^{\frac{1}{\theta}}+\sup_{B_{R}}|Df_1|^{\frac{1}{2\theta-1}}).
\end{equation*}
for any $C^2$-solution of Equation~(\ref{Basice}).
\end{theorem}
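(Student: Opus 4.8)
The plan is to establish the bound by the classical \emph{Bernstein method} applied to $w:=|D\phi|^2$. By a routine approximation (mollifying $f_1$ into smooth functions $f_1^\delta$ converging in $W^{1,\infty}_{\mathrm{loc}}$, solving the corresponding equations and proving the estimate uniformly in $\delta$), or by an elliptic bootstrap, one may first assume $\phi$ is regular enough that $w\in C^2$. Differentiating \eqref{Basice} in $x_k$, multiplying by $2\phi_{x_k}$ and summing over $k$, and using the identities $\sum_k\phi_{x_k}\Delta\phi_{x_k}=\tfrac12\Delta w-|D^2\phi|^2$ and $\sum_{j,k}\phi_{x_j}\phi_{x_k}\phi_{x_jx_k}=\tfrac12\,D\phi\cdot Dw$, one obtains the pointwise identity
\begin{equation*}
-\tfrac12\Delta w+|D\phi|^{\theta-2}D\phi\cdot Dw+2\epsilon w+|D^2\phi|^2=2\,Df_1\cdot D\phi\quad\text{in }\Omega .
\end{equation*}
The whole point is the favorable term $|D^2\phi|^2$: since $|D^2\phi|^2\ge\tfrac1m(\Delta\phi)^2$ and, by \eqref{Basice}, $\Delta\phi=2\big(\tfrac1\theta|D\phi|^\theta+\epsilon\phi-f_1\big)$, the elementary inequality $(a+b)^2\ge\tfrac12a^2-b^2$ yields $|D^2\phi|^2\ge\tfrac{2}{m\theta^2}\,w^{\theta}-\tfrac4m A^2$ with $A:=\sup_\Omega|\epsilon\phi|+\sup_\Omega|f_1|$; this is where the coercivity of $p\mapsto\tfrac1\theta|p|^\theta$ enters.

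Next I would localize. Fix $\zeta\in C^\infty_c(\Omega)$ with $0\le\zeta\le1$, $\zeta\equiv1$ on $\Omega'$, and $|D\zeta|\le C/d$, $|D^2\zeta|\le C/d^2$ where $d:=\mathrm{dist}(\Omega',\partial\Omega)$, together with a large positive integer $a=a(\theta)$ chosen so that $a(\theta-1)\ge2$. Set $\Theta:=\zeta^a w$, which is continuous on $\bar\Omega$ and vanishes on $\partial\Omega$. If $\Theta\equiv0$ then $w\equiv0$ on $\Omega'$ and there is nothing to prove; otherwise $\Theta$ attains a positive maximum at some $x_0\in\Omega$ with $\zeta(x_0)>0$ (and then $w(x_0)>0$, so the $|D\phi|^{\theta-2}$ factor is harmless near $x_0$). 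At $x_0$, $D\Theta=0$ forces $Dw=-a\zeta^{-1}(D\zeta)w$, while $\Delta\Theta\le0$, after substituting this expression for $Dw$, gives $\Delta w\le C(a)\big(\zeta^{-2}|D\zeta|^2+\zeta^{-1}|\Delta\zeta|\big)w$. Inserting these relations into the identity for $w$ at $x_0$, estimating $|D\phi\cdot D\zeta|\le w^{1/2}|D\zeta|$, $\big||D\phi|^{\theta-2}D\phi\cdot Dw\big|\le a\zeta^{-1}|D\zeta|\,w^{(\theta+1)/2}$ and $|Df_1\cdot D\phi|\le(\sup_\Omega|Df_1|)\,w^{1/2}$, discarding the favorable term $2\epsilon w\ge0$, and using the lower bound on $|D^2\phi|^2$, one arrives at an inequality of the form
\begin{equation*}
\tfrac{2}{m\theta^2}\,w^{\theta}\le C(a)\,\zeta^{-2}\big(|D\zeta|^2+\zeta|\Delta\zeta|\big)w+a\zeta^{-1}|D\zeta|\,w^{(\theta+1)/2}+2(\sup_\Omega|Df_1|)\,w^{1/2}+\tfrac4m A^2\quad\text{at }x_0 .
\end{equation*}

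I would then multiply through by $\zeta^{a\theta}$ and rewrite everything in terms of $\Theta=\zeta^a w$: every power of $w$ on the right is strictly below $\theta$ (they are $1$, $(\theta+1)/2$ and $1/2$, all $<\theta$ since $\theta>1$), and the choice $a(\theta-1)\ge2$ makes all leftover powers of $\zeta$ nonnegative, so that $\zeta\le1$ lets us drop them; this gives, at $x_0$,
\begin{equation*}
\tfrac{2}{m\theta^2}\,\Theta^{\theta}\le C_1\Theta+C_2\Theta^{(\theta+1)/2}+C_3\Theta^{1/2}+C_4 ,
\end{equation*}
with $C_1,C_2$ depending on $m,\theta,d$, with $C_3$ depending on $\sup_\Omega|Df_1|$ and $C_4$ on $A$. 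Young's inequality absorbs each of the first three terms into $\tfrac1{m\theta^2}\Theta^\theta$ plus a constant, leaving $\Theta(x_0)\le K'\big(1+A^{2/\theta}+(\sup_\Omega|Df_1|)^{2/(2\theta-1)}\big)$ with $K'=K'(m,\theta,d)$. Since $\sup_{\Omega'}w\le\sup_{\Omega'}\Theta\le\Theta(x_0)$, taking square roots and using the subadditivity of $t\mapsto t^{1/2}$ and of $t\mapsto t^{1/\theta}$ gives exactly
\begin{equation*}
\sup_{\Omega'}|D\phi|\le K\big(1+\sup_\Omega|\epsilon\phi|^{1/\theta}+\sup_\Omega|f_1|^{1/\theta}+\sup_\Omega|Df_1|^{1/(2\theta-1)}\big)
\end{equation*}
with $K=K(m,\theta,d)$. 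For the ``in particular'' assertion, with $\Omega=B_R$, $\Omega'=B_{R'}$, $R\ge R'+1$ and $\epsilon=0$, one has $d=R-R'\ge1$, so $\zeta$ can be taken with $\|D\zeta\|_\infty,\|D^2\zeta\|_\infty\le C(m)$ and $K$ then depends only on $m$ and $\theta$.

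The step I expect to be the main obstacle is the bookkeeping in the localization: one must choose the exponent $a$ so that, after multiplying by $\zeta^{a\theta}$, all negative powers of $\zeta$ cancel while simultaneously every power of $w$ stays strictly below $\theta$, so that Young's inequality closes the estimate. This is precisely where the coercivity of the gradient term $p\mapsto\tfrac1\theta|p|^\theta$ (which produces the decisive $w^\theta$ gain out of $|D^2\phi|^2$) is essential, and it is also the reason the exponent $\tfrac1{2\theta-1}$, rather than $\tfrac1\theta$, appears in front of $\sup_\Omega|Df_1|$. A secondary, purely technical point is justifying the Bernstein computation for merely $C^2$ solutions, handled by the approximation step mentioned at the outset (alternatively one may simply invoke \cite{NI3} or \cite{LaLi}).
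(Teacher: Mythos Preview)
Your Bernstein-method argument is correct and complete: the differentiated identity for $w=|D\phi|^2$, the use of $|D^2\phi|^2\ge\frac1m(\Delta\phi)^2$ combined with the equation to extract the coercive term $w^\theta$, the localization with $\zeta^a w$ under the constraint $a(\theta-1)\ge2$, and the final Young-inequality balancing are all handled properly, and the exponents $\frac1\theta$ and $\frac{1}{2\theta-1}$ fall out exactly as claimed.

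The paper, however, does not actually prove this theorem: it merely states it in the Appendix and refers the reader to \cite{NI3} (Ichihara) and \cite{LaLi} (Lasry--Lions) for the proof. So there is no proof in the paper to compare with. Your argument is precisely the classical Bernstein computation that underlies those references, so you have in effect supplied what the paper leaves to citation. Your closing remark that ``alternatively one may simply invoke \cite{NI3} or \cite{LaLi}'' is therefore exactly what the paper itself does.
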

\end{appendices}

\end{document}